\documentclass[journal,11pt,onecolumn]{IEEEtran}
\usepackage{amsmath}
\usepackage{amsfonts}
\usepackage{amsthm}
\usepackage{dsfont}
\usepackage{epsfig}
\usepackage{upgreek}
\usepackage{graphicx}

\usepackage{setspace}

\theoremstyle{definition}
\newtheorem{theorem}{Theorem}[section]
\newtheorem{lemma}[theorem]{Lemma}

\newtheorem{definition}[theorem]{Definition}

\newcommand{\Diag}{\mbox{Diag}}
\newcommand{\Abs}{\mbox{Abs}}
\newcommand{\Log}{\mbox{{LogAbs}}}

\newcommand{\Exp}{\mbox{{Exp}}}

\newcommand{\Jn}{\mbox{${\bf J}_n$}}
\newcommand{\Imn}{\mbox{${\bf I}_n$}}

\newcommand{\Jm}{\mbox{${\bf J}_m$}}
\newcommand{\Imm}{\mbox{${\bf I}_m$}}
\newcommand{\Imat}{\mbox{${\bf I}$}}

\newcommand{\Amn}{\mbox{${\bf A}$}}
\newcommand{\Amni}{\mbox{${\bf A}^{\!\mbox{\tiny -1}}$}}

\newcommand{\Lmn}{\mbox{${\bf L}$}}

\newcommand{\Dmp}{\mbox{${\bf D}_{\mbox{\tiny +}}$}}
\newcommand{\Dmu}{\mbox{${\bf D}_{\mbox{\tiny u}}$}}
\newcommand{\Dmv}{\mbox{${\bf D}_{\mbox{\tiny v}}$}}
\newcommand{\Dmpi}{\mbox{${\bf D}_{\tiny +}^{\tiny -1}$}}

\newcommand{\Dmui}{\mbox{${\bf D}_{\tiny u}^*$}}

\newcommand{\Emp}{\mbox{${\bf E}_{\mbox{\tiny +}}$}}
\newcommand{\Empi}{\mbox{${\bf E}_{\tiny +}^{\tiny -1}$}}
\newcommand{\Emu}{\mbox{${\bf E}_{\mbox{\tiny u}}$}}

\newcommand{\Emui}{\mbox{${\bf E}_{\tiny u}^*$}}

\newcommand{\omc}{\mbox{${{\mathds 1}}_m$}}
\newcommand{\omr}{\mbox{${{\mathds 1}}_m^{\textnormal{\tiny{T}}}$}}
\newcommand{\onc}{\mbox{${{\mathds 1}}_n$}}
\newcommand{\onr}{\mbox{${\mathds 1}_n^{\textnormal{\tiny{T}}}$}}

\newcommand{\znc}{\mbox{${{\bf 0}}_n$}}

\newcommand{\dmc}{\mbox{${\bf d}_m$}}

\newcommand{\emc}{\mbox{${\bf e}_m$}}
\newcommand{\emr}{\mbox{${\bf e}_m^{\textnormal{\tiny{T}}}$}}
\newcommand{\enc}{\mbox{${\bf e}_n$}}
\newcommand{\enr}{\mbox{${\bf e}_n^{\mbox{\tiny{T}}}$}}

\newcommand{\umc}{\mbox{${\bf u}_m$}}

\newcommand{\vmc}{\mbox{${\bf v}_m$}}

\newcommand{\vnc}{\mbox{${\bf v}_n$}}
\newcommand{\vnr}{\mbox{${\bf v}_n^{\mbox{\tiny{T}}}$}}
\newcommand{\stimes}{\mbox{$s_{\times}$}}

\newcommand{\Sm}{\mbox{${\bf S}$}}

\newcommand{\uv}{\mbox{${\bf u}$}}
\newcommand{\vv}{\mbox{${\bf v}$}}

\newcommand{\bv}{\mbox{${\bf b}$}}

\newcommand{\Perm}{\mbox{${\bf P}$}}
\newcommand{\Permt}{\mbox{${\bf P}^{\mbox{\tiny{T}}}$}}
\newcommand{\Qm}{\mbox{${\bf Q}$}}

\newcommand{\Rm}{\mbox{${\bf R}$}}

\newcommand{\Vm}{\mbox{${\bf V}$}}
\newcommand{\Vmi}{\mbox{${\bf V}^*$}}
\newcommand{\Xm}{\mbox{${\bf X}$}}
\newcommand{\Xmi}{\mbox{${\bf X}^{\mbox{\tiny -1}}$}}
\newcommand{\Xpinv}{\mbox{${\bf X}^{\!\mbox{\tiny -P}}$}}

\newcommand{\Ym}{\mbox{${\bf Y}$}}
\newcommand{\Ymi}{\mbox{${\bf Y}^{\mbox{\tiny -1}}$}}

\newcommand{\Dm}{\mbox{${\bf D}$}}
\newcommand{\Dmi}{\mbox{${\bf D}^{\mbox{\tiny -1}}$}}
\newcommand{\Dma}{\mbox{${\bf D}_1$}}
\newcommand{\Dmia}{\mbox{${\bf D}_1^{\mbox{\tiny -1}}$}}
\newcommand{\Dmb}{\mbox{${\bf D}_2$}}
\newcommand{\Dmib}{\mbox{${\bf D}_2^{\mbox{\tiny -1}}$}}
\newcommand{\Em}{\mbox{${\bf E}$}}
\newcommand{\Emi}{\mbox{${\bf E}^{\mbox{\tiny -1}}$}}
\newcommand{\Ema}{\mbox{${\bf E}_1$}}
\newcommand{\Emia}{\mbox{${\bf E}_1^{\mbox{\tiny -1}}$}}
\newcommand{\Emb}{\mbox{${\bf E}_2$}}
\newcommand{\Emib}{\mbox{${\bf E}_2^{\mbox{\tiny -1}}$}}

\newcommand{\Fm}{\mbox{${\bf F}$}}
\newcommand{\Fmt}{\mbox{${\bf F}^*$}}
\newcommand{\Gm}{\mbox{${\bf G}$}}
\newcommand{\Gmt}{\mbox{${\bf G}^*$}}

\newcommand{\dinv}[1]{{#1}^{\mbox{\tiny -D}}}
\newcommand{\linv}[1]{{#1}^{\mbox{\tiny -L}}}
\newcommand{\rinv}[1]{{#1}^{\mbox{\tiny -R}}}
\newcommand{\ginv}[1]{{#1}^{\mbox{\tiny -U}}}
\newcommand{\pinv}[1]{{#1}^{\mbox{\tiny -P}}}
\newcommand{\inv}[1]{{#1}^{\mbox{\tiny -1}}}
\newcommand{\DL}{\mbox{${\cal D}_{\mbox{\tiny L}}$}}
\newcommand{\DG}{\mbox{${\cal S}_{\mbox{\tiny U}}$}}
\newcommand{\DGL}{\mbox{${\cal D}_{\mbox{\tiny U}}^{\mbox{\tiny L}}$}}
\newcommand{\DGR}{\mbox{${\cal D}_{\mbox{\tiny U}}^{\mbox{\tiny R}}$}}
\newcommand{\TR}{\mbox{${\bf\cal T}$}}

\newcommand{\norm}[1]{\mbox{$\lVert #1 \rVert$}}

\newcommand{\Um}{\mbox{${\bf U}$}}
\newcommand{\Umi}{\mbox{${\bf U}^*$}}

\newcommand{\xmc}{\mbox{${\bf x}_m$}}

\newcommand{\ync}{\mbox{${\bf y}_n$}}
\newcommand{\ynr}{\mbox{${\bf y}_n^{\mbox{\tiny{T}}}$}}

\newcommand{\Atinv}{\mbox{${\bf A}^{\overset{\sim}{\!\mbox{\tiny -1}}}$}}
\newcommand{\Adinv}{\mbox{${\bf A}^{\!\mbox{\tiny -D}}$}}
\newcommand{\Apinv}{\mbox{${\bf A}^{\!\mbox{\tiny -P}}$}}
\newcommand{\Aginv}{\mbox{${\bf A}^{\!\mbox{\tiny -U}}$}}

\newcommand{\tinv}[1]{\mbox{${#1}^{\overset{\sim}{\mbox{\tiny -1}}}$}}

\newcommand{\yv}{\mbox{$\hat{\bf y}$}}
\newcommand{\av}{\mbox{$\hat{\bf \uptheta}$}}
\newcommand{\yyv}{\mbox{$\hat{\bf y}'$}}
\newcommand{\aav}{\mbox{$\hat{\bf \uptheta}'$}}

\begin{document}

\title{{A Generalized Matrix Inverse that is Consistent\\
           with Respect to Diagonal Transformations} }       
\author{
\IEEEauthorblockN{{\large\bf Jeffrey Uhlmann}}\\
\IEEEauthorblockA{\small University of Missouri-Columbia\\
201 EBW, Columbia, MO 65211\\
Email: uhlmannj@missouri.edu}}
\date{}     
\maketitle
\thispagestyle{empty}

\vspace{-11pt}

\begin{abstract}
A new generalized matrix inverse is derived which is consistent with respect to arbitrary nonsingular diagonal transformations, e.g., it preserves units associated with variables under state space transformations, thus providing a general solution to a longstanding open problem relevant to a wide variety of applications in robotics, tracking, and control systems. The new inverse complements the Drazin inverse (which is consistent with respect to similarity transformations) and the Moore-Penrose inverse (which is consistent with respect to unitary/orthonormal transformations) to complete a trilogy of generalized matrix inverses that exhausts the standard family of analytically-important linear system transformations. Results are generalized to obtain unit-consistent and unit-invariant matrix decompositions and examples of their use are described.\\
~\\
\begin{footnotesize}
\noindent {\bf Keywords}: {Drazin Inverse, Generalized Matrix Inverse, 
Inverse Problems, Linear Estimation, Linear Systems, Machine Learning, Matrix Analysis,  
Moore-Penrose Pseudoinverse, Scale Invariance, Singular Value Decomposition, SVD, 
System Design, Unit Consistency.}
\end{footnotesize}

\end{abstract}

\section{Introduction}

For a nonsingular $n\times n$  matrix\footnote{It is assumed throughout that 
matrices are defined over an associative normed division algebra (real, complex, 
and quaternion). The inverse of a unitary matrix $\Um$ can therefore be
expressed using the conjugate-transpose operator as $\Umi$.}  
$\Amn$ there exists a unique matrix inverse,
$\Amni$, for which certain properties of scalar inverses 
are preserved, e.g., commutativity:
\begin{equation}
      \Amn\Amni ~ = ~ \Amni\Amn ~ = ~ \Imat
\end{equation}
while others have direct analogs, e.g., matrix inversion distributes
over nonsingular multiplicands as:
\begin{equation}
      \inv{(\Xm\Amn\Ym)} ~ = ~ \Ymi\Amni\Xmi 
\end{equation}
When attempting to generalize the notion of a matrix
inverse for singular $\Amn$ it is only possible to 
define an approximate inverse, $\Atinv$, which retains
a subset of the algebraic properties of a true
matrix inverse. For example, 
a generalized inverse definition might simply require
the product $\Amn\Atinv$ to be idempotent in analogy to the identity matrix. 
Alternative definitions might further require:
\begin{equation}
      \Amn\Atinv\Amn ~ = ~ \Amn 
\end{equation}
or
\begin{equation}
      \Atinv\Amn ~ = ~ \Amn\Atinv
\end{equation}
and/or other properties that may be of analytic
or application-specific utility. 

The vast literature\footnote{Much of the literature on generalized inverses is covered
in the comprehensive book by Ben-Israel and Greville~\cite{big}.}
on generalized inverse theory spans more than a century and
can inform the decision about which of the many 
possible generalized inverses is best suited to the needs of a
particular application.
For example, the {\em Drazin} inverse, $\Adinv$, satisfies the following
for any square matrix $\Amn$ and nonsingular matrix 
$\Xm$~\cite{drazin,cmr76,big}:
\begin{eqnarray}
    & ~ &       \Adinv\Amn\Adinv ~ = ~ \Adinv \\
    & ~ &       \Amn\Adinv ~ = ~ \Adinv\Amn \label{dinvcom} \\
    & ~ &       \dinv{(\Xm\Amn\Xmi)} ~ = ~ \Xm\Adinv\Xmi \label{dinvcnst}
\end{eqnarray}
Thus it is applicable when there is need for
commutativity (Eq.(\ref{dinvcom})) and/or consistency with respect
to similarity transformations (Eq.(\ref{dinvcnst})). On the
other hand, the Drazin inverse is only defined for square matrices
and does not guarantee the rank of 
$\Adinv$ to be the same as $\Amn$. Because
the rank of $\Adinv$ may be less than that of $\Amn$ 
(and in fact is zero for all nilpotent matrices), it is
not appropriate for recursive control and estimation
problems (and many other 
applications) that cannot accommodate
progressive rank reduction. 

The Moore-Penrose {\em pseudoinverse}, $\Apinv$, is defined 
for any $m\times n$ matrix $\Amn$ and satisfies conditions
which include the following for any conformant unitary matrices 
$\Um$ and $\Vm$:
\begin{eqnarray}
    & ~ &      \mbox{rank}[\Apinv] ~ = ~ \mbox{rank}[\Amn] \\
    & ~ &      \Amn\Apinv\Amn ~ = ~ \Amn \\
    & ~ &       \Apinv\Amn\Apinv ~ = ~ \Apinv \\
    & ~ &       \pinv{(\Um\Amn\Vm)} ~ = ~ \Vmi\Apinv\Umi \label{udist}
\end{eqnarray}
Its use is therefore appropriate when there is need for
unitary consistency, i.e., as guaranteed by Eq.(\ref{udist}). 
Despite its near-universal use throughout many areas of 
science and engineering ranging from tomography~\cite{berryman00} 
to genomics analysis~\cite{altergolub04}, the Moore-Penrose
inverse is not appropriate for many problems to which it is
commonly applied, e.g., state-space applications 
that require consistency with respect to the choice of units
for state variables. In the case of a square singular transformation
matrix $\Amn$, for example, a simple change of units applied 
to a set of state variables may require an
inverse $\Atinv$ to be preserved under diagonal similarity
\begin{equation}
      \tinv{(\Dm\Amn\Dmi)} ~ = ~ \Dm\Atinv\Dmi
\end{equation}  
where the nonsingular diagonal matrix $\Dm$ defines an arbitrary 
change of units. The Moore-Penrose inverse does not
satisfy this requirement because $\pinv{(\Dm\Amn\Dmi)}$
does not generally equal $\Dm\Apinv\Dmi$. As a concrete
example, given
\begin{equation}
\label{beginexample}
\Dm ~=~ \left[\begin{array}{cc}
                         1 & 0\\ 0 & 2 \end{array}\right] ~ ~ ~ ~
\Amn ~=~ \left[\begin{array}{cc}
                         1/2 & -1/2\\ 1/2 & -1/2 \end{array}\right] ~ ~ ~ ~
\end{equation}
it can be verified that 
\begin{equation}
\Apinv ~=~ \left[\begin{array}{cc}
                         ~~1/2 & ~~1/2\\ -1/2 & -1/2 \end{array}\right] 
\end{equation}
and that 
\begin{equation}
\Dm\Apinv\Dmi ~=~ \left[\begin{array}{cc}
                                         ~1/2 & ~1/4\\ -1 & -1/2 \end{array}\right] 
\end{equation}
which does not equal
\begin{equation}
\label{endexample}
\pinv{(\Dm\Amn\Dmi)} ~=~ \left[\begin{array}{cc}
                                                     ~~0.32 & ~~0.64\\ -0.16 & -0.32 \end{array}\right]. 
\end{equation}

To appreciate the significance of
unit consistency, consider the standard
linear model
\begin{equation}
     \yv ~ = ~ \Amn\cdot\av \label{linmod}
\end{equation}
where the objective is to identify a vector $\av$ of parameter values
satisfying the above equation for a data matrix $\Amn$ and
a known/desired state vector $\yv$. If $\Amn$ is nonsingular
then there exists a unique $\Amni$ which gives the solution
\begin{equation}
     \av ~ = ~ \Amni\cdot\yv
\end{equation}
If, however, $\Amn$ is singular then the
Moore-Penrose inverse could be applied as
\begin{equation}
     \av ~ = ~ \Apinv\cdot\yv
\end{equation}
to obtain a solution. Now 
suppose that $\yv$ and $\av$ are expressed in 
different units as
\begin{eqnarray}
     \yyv & = & \Dm\yv \\
     \aav & = & \Em\av
\end{eqnarray}
where the diagonal matrices $\Dm$ and $\Em$ represent changes
of units, 
e.g.,  from imperial to metric, or rate-of-increase in liters-per-hour 
to a rate-of-decrease in liters-per-minute, or
any other multiplicative change of units. Then Eq.(\ref{linmod}) can 
be rewritten in the new units as
\begin{equation}
     \yyv ~ = ~ \Dm\yv  ~ = ~ (\Dm\Amn\Emi)\cdot\Em\av
     ~ = ~ (\Dm\Amn\Emi)\cdot\aav
\end{equation}
but for which  
\begin{equation}
      \Em\av   ~ \neq ~ \pinv{(\Dm\Amn\Emi)}\cdot\yyv 
\end{equation}
In other words, the change of units applied to the input
does not generally produce the same output in the new units.
This is because the Moore-Penrose inverse only 
guarantees consistency with 
respect to unitary transformations (e.g., rotations) and not with
respect to nonsingular diagonal transformations.
To ensure unit consistency in this example a generalized matrix 
inverse $\Atinv$ would have to satisfy 
\begin{eqnarray}
      \tinv{(\Dm\Amn\Emi)}  & = & \Em\Atinv\Dmi
\end{eqnarray}
Stated more generally, if $\Amn$ represents a mapping
$V\rightarrow W$ from a vector space $V$ to a vector space
$W$ then the inverse transformation $\Atinv$ must preserve
consistency with respect to the application of arbitrary 
changes of units to the coordinates (state variables) associated
with $V$ and $W$. 

Unit consistency (UC) has been suggested in the past
as a critical consideration in specific applications (e.g.,
robotics~\cite{duffy90,doty2} and
data fusion~\cite{jku95}), but the means for
enforcing it have been limited because the most
commonly applied tools in linear systems analysis,
the eigen and singular-value decompositions, are inherently
not unit consistent and therefore require
UC alternatives. This may explain why in practice it is so
common  -- {\em almost reflexive} -- for an arbitrary
criterion such as ``least-squares'' minimization
(which is implicit when the Moore-Penrose inverse
is used) to be applied without consideration for whether it is
appropriate for the application at hand. In this paper
the necessary analytical and practical tools to support
unit consistency are developed. 

The structure of the paper is as follows:
Section~\ref{lrucinv} describes a simple and commonly-used
(at least implicitly)
mechanism for obtaining one-sided unit consistency.
Section~\ref{nzginv} develops a unit-consistent 
generalized inverse for elemental nonzero matrices,
and Section~\ref{gensec} develops the fully general
unit-consistent generalized matrix inverse.
Section~\ref{ucsvd} applies the techniques used
to achieve unit consistency for the generalized inverse 
problem to develop unit-consistent and unit-invariant
alternatives to the singular value decomposition (SVD) 
and other tools from linear algebra.  Finally, Section~\ref{discsec} 
summarizes and discusses the contributions of the paper.

\section{Left and Right UC Generalized Inverses}
\label{lrucinv}

Inverse consistency with respect to a nonsingular left diagonal
transformation, $\linv{(\Dm\Amn)}=\linv{\Amn}\Dmi$, or a right nonsingular diagonal
transformation, $\rinv{(\Amn\Dm)}=\Dmi\rinv{\Amn}$, is straightforward to obtain.
The solution has been exploited implicitly in one form or another in many
applications over the years; however, its formal derivation and
analysis is a useful exercise to establish concepts and notation
that will be used later to derive the fully-general UC solution.

\begin{definition}
\label{defdl}
Given an $m\times n$ matrix $\Amn$, a {\em left diagonal scale function}, 
$\DL[\Amn]\in\mathbb{R}_+^{m\times m}$, is defined as giving
a positive diagonal matrix satisfying the following 
for all conformant positive diagonal matrices $\Dmp$, unitary diagonals $\Dmu$, 
permutations $\Perm$, and unitaries $\Um$:
\begin{eqnarray}
   & ~ & \DL[\Dmp\Amn]\cdot(\Dmp\Amn)   ~ = ~ \DL[\Amn]\cdot\Amn , \label{lscalinv}\\
   & ~ & \DL[\Dmu\Amn] ~ = ~ \DL[\Amn], \label{lduinv}\\
   & ~ & \DL[\Perm\Amn] ~ = ~ \Perm\cdot \DL[\Amn]\cdot\Permt, \label{lperminv}\\
   & ~ & \DL[\Amn\Um] ~ = ~ \DL[\Amn] \label{lruinv} 
\end{eqnarray}
In other words, the product $\DL[\Amn]\cdot\Amn$ is invariant with respect to any positive left-diagonal
scaling of $\Amn$, and $\DL[\Amn]$ is consistent with respect to any left-permutation\footnote{Note: Eq.(27) here
includes a $\Permt$ term errantly missing in the final journal version.} of $\Amn$ and is  invariant 
with respect to left-multiplication by any diagonal unitary and/or 
right-multiplication by any general unitary.
\end{definition}

\begin{lemma}
\label{dconst}
Existence of a left-diagonal scale function according to Definition~\ref{defdl} is established
by instantiating $\DL[\Amn]=\Dm$ with
\begin{equation}
   \Dm(i,i) ~ \doteq ~  
   \begin{cases}
              1/\norm{\Amn(i,:)}  & \norm{\Amn(i,:)}>0\\
             1 & \textnormal{otherwise}
   \end{cases}
\end{equation}
where $\Amn(i,:)$ is row $i$ of $\Amn$ and $\norm{\cdot}$  is a fixed unitary-invariant vector 
norm\footnote{The unitary-invariant norm used here is necessary only because of 
the imposed right-invariant condition of Eq.(\ref{lruinv}).}.
\end{lemma}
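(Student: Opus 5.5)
The plan is to verify the four defining conditions of Definition~\ref{defdl} directly for the proposed $\Dm$, working row by row. Since $\Dm$ is diagonal, row $i$ of $\Dm\Amn$ is $\Dm(i,i)\Amn(i,:)$. So every condition reduces to a statement about the scalars $\norm{\Amn(i,:)}$ and how they change under each transformation. First I note that $\Dm$ is indeed a positive diagonal matrix: each entry is either $1$ or the reciprocal of a positive norm.

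For Eq.(\ref{lscalinv}), let $\Dmp$ have diagonal entries $d_i>0$. Row $i$ of $\Dmp\Amn$ is $d_i\Amn(i,:)$, with norm $d_i\norm{\Amn(i,:)}$ by absolute homogeneity of the norm. If $\norm{\Amn(i,:)}>0$, then row $i$ of $\DL[\Dmp\Amn](\Dmp\Amn)$ is $d_i\Amn(i,:)/(d_i\norm{\Amn(i,:)})=\Amn(i,:)/\norm{\Amn(i,:)}$, which matches row $i$ of $\DL[\Amn]\Amn$. If $\norm{\Amn(i,:)}=0$, then the row is zero both before and after scaling, so both products have a zero row $i$. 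The case split on zero rows is the only point requiring care, and it is exactly what the ``otherwise'' branch handles.

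For Eq.(\ref{lduinv}), a unitary diagonal multiplies row $i$ by a scalar of modulus one. This leaves $\norm{\Amn(i,:)}$ unchanged, including whether it is zero, so $\Dm$ is unchanged. Here I use that the norm is absolutely homogeneous over the underlying division algebra, with $|u|=1$; for quaternions, left-multiplication of a row by a unit quaternion is itself a unitary map on the row, which makes this consistent with unitary invariance.

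For Eq.(\ref{lperminv}), suppose $\Perm$ maps row $j$ to position $\pi(j)$. Then the rows of $\Perm\Amn$ are the rows of $\Amn$ reordered, so the diagonal of $\DL[\Perm\Amn]$ is the diagonal of $\DL[\Amn]$ reordered in the same way. A permutation applied to a diagonal matrix in this manner is precisely $\Perm\,\DL[\Amn]\,\Permt$.

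For Eq.(\ref{lruinv}), row $i$ of $\Amn\Um$ is $\Amn(i,:)\Um$, whose norm equals $\norm{\Amn(i,:)}$ by the assumed unitary invariance of the vector norm. Zero rows also stay zero, so $\Dm$ is unchanged.

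None of these steps is a real obstacle. The only subtleties are keeping the zero-row case consistent across all four conditions and noting that the unitary-invariance hypothesis on the norm is used only in Eq.(\ref{lruinv}).
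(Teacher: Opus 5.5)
Your proposal is correct and follows essentially the same row-by-row verification of the four conditions of Definition~\ref{defdl} as the paper. In both, the zero-row case is handled by the ``otherwise'' branch and unitary invariance of the norm is needed only for Eq.~(\ref{lruinv}). The only difference is at Eq.~(\ref{lduinv}): you invoke absolute homogeneity of the norm, while the paper observes that elementwise magnitudes are unchanged. Your quaternion aside is unnecessary, and left multiplication by a non-real unit quaternion is not of the form $x\mapsto x\Um$, but nothing in your argument depends on it.
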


\begin{proof}
$\DL[\cdot]$ as defined by Lemma~\ref{dconst} is a
strictly positive diagonal as required, and the left scale-invariance
condition of Eq.(\ref{lscalinv}) holds trivially for any row of $\Amn$ with
all elements equal to zero and holds for every nonzero row $i$ by homogeneity 
for any choice of vector norm as 
\begin{eqnarray}
    \Dmp(i,i)\Amn(i,:) ~ / ~ \norm{\Dmp(i,i)\Amn(i,:)} & = & \Dmp(i,i)\Amn(i,:) ~ / ~ 
                                                             \left(\Dmp(i,i)\cdot\norm{\Amn(i,:)}\right)  \\
   ~ & = & \Amn(i,:) ~ / ~ \norm{\Amn(i,:)}. 
\end{eqnarray}

The left diagonal-unitary-invariance condition of Eq.(\ref{lduinv}) 
is satisfied as $|\Dmu(i,i)|=1$ implies
\begin{equation}
    |(\Dmu\Amn)(i,j)|=|\Amn(i,j)|
\end{equation}
for every element 
$j$ of row $i$ of $\Dmu\Amn$.
The left permutation-invariance of Eq.(\ref{lperminv}) holds as
element $\Dm(i,i)$ is indexed with respect to the rows of $\Amn$,
and the right unitary-invariance
condition of Eq.(\ref{lruinv}) is satisfied by 
the assumed unitary invariance of the vector norm applied
to the rows of $\Amn$. 
\end{proof}

If $\Amn$ has {\em full support}, i.e., no row or column with all
elements equal to zero, then $\DL[\Dmp\Amn]=\Dmpi\cdot\DL[\Amn]$.
If, however, there exists a row $i$ of $\Amn$ with all
elements equal to zero then the $i$th diagonal element of 
$\DL[\Dmp\Amn]$ is $1$ according to Lemma~\ref{dconst},
so the corresponding element of $\Dmpi\cdot\DL[\Amn]$ will
be different unless $\Dmpi(i,i)=1$. Eq.(\ref{lscalinv}) holds 
because such elements are only applied to scale
rows of $\Amn$ with all elements equal to zero. The following
similarly holds in general
\begin{equation}
     \DL[\Dmp\Amn]\cdot\Amn ~ = ~ \Dmpi\cdot\DL[\Amn]\cdot\Amn, \label{dlident}
\end{equation}
and because any row $i$ of zeros in $\Amn$ implies that column $i$ of $\pinv{\Amn}$ will be 
zeros, the following also holds in general
\begin{equation}
     \pinv{\Amn}\cdot\DL[\Dmp\Amn] ~ = ~ \pinv{\Amn}\cdot\Dmpi\cdot\DL[\Amn].
\end{equation}

At this point it is possible to
derive a left generalized inverse of an arbitrary $m \times n$ matrix
$\Amn$, denoted $\linv{\Amn}$, that is consistent with 
respect to multiplication on the left by an arbitrary nonsingular diagonal matrix.

\begin{theorem}
\label{linvt}
For $m\times n$ matrix $\Amn$, the operator 
\begin{equation}
   \linv{\Amn} ~ \doteq ~ \pinv{\left(\DL[\Amn]\cdot\Amn\right)}\cdot\DL[\Amn]
\end{equation}
satisfies for any nonsingular diagonal matrix $\Dm$:
\begin{eqnarray}
   \Amn\linv{\Amn}\Amn & = & \Amn, \\
   \linv{\Amn}\Amn\linv{\Amn} & = & \linv{\Amn}, \\
   \linv{(\Dm\Amn)} & = & \linv{\Amn} \Dmi, \\
   \mbox{rank}[\linv{\Amn}] & = & \mbox{rank}[\Amn] 
\end{eqnarray}
and is therefore a left unit-consistent generalized inverse.
\end{theorem}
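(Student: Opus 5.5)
Write $\Dm_A\doteq\DL[\Amn]$ and $\Bm\doteq\Dm_A\Amn$, so that $\linv{\Amn}=\pinv{\Bm}\Dm_A$. Since $\Dm_A$ is a nonsingular (positive) diagonal matrix, everything will reduce to Moore-Penrose properties of $\Bm$ together with the scale-invariance of $\DL$ established in Lemma~\ref{dconst}. I would verify the four properties in the following order.

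\emph{Generalized inverse conditions.} For the first condition, substitute $\Amn=\inv{\Dm_A}\Bm$ to get
\[
\Amn\linv{\Amn}\Amn=\inv{\Dm_A}\Bm\pinv{\Bm}\Dm_A\inv{\Dm_A}\Bm=\inv{\Dm_A}\Bm\pinv{\Bm}\Bm=\inv{\Dm_A}\Bm=\Amn .
\]
For the second,
\[
\linv{\Amn}\Amn\linv{\Amn}=\pinv{\Bm}\Dm_A\inv{\Dm_A}\Bm\pinv{\Bm}\Dm_A=\pinv{\Bm}\Bm\pinv{\Bm}\Dm_A=\pinv{\Bm}\Dm_A .
\]
Both follow directly from the Penrose identities for $\Bm$.

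\emph{Rank.} Because $\Dm_A$ is nonsingular, $\mbox{rank}[\linv{\Amn}]=\mbox{rank}[\pinv{\Bm}]=\mbox{rank}[\Bm]=\mbox{rank}[\Amn]$.

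\emph{Consistency (the main step).} Let $\Dm$ be any nonsingular diagonal matrix. Factor it as $\Dm=\Dmu\Dmp$, with $\Dmp=\Abs(\Dm)$ positive and $\Dmu$ a unitary diagonal.
\begin{itemize}
\item By Eq.(\ref{lduinv}), $\DL[\Dm\Amn]=\DL[\Dmu\Dmp\Amn]=\DL[\Dmp\Amn]$.
\item By Eq.(\ref{dlident}), $\DL[\Dmp\Amn]=\Dmpi\Dm_A$ on every row where $\Amn$ is nonzero. Since diagonals commute, this gives
\[
\DL[\Dm\Amn]\cdot\Dm\Amn=\Dmu\,\DL[\Dmp\Amn]\,\Dmp\Amn=\Dmu\Bm .
\]
\item By unitary consistency of the Moore-Penrose inverse, $\pinv{(\Dmu\Bm)}=\pinv{\Bm}\Dmui$. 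Hence
\[
\linv{(\Dm\Amn)}=\pinv{\Bm}\,\Dmui\,\DL[\Dmp\Amn].
\]
\end{itemize}

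The obstacle is that $\DL[\Dmp\Amn]$ equals $\Dmpi\Dm_A$ only on rows where $\Amn$ is nonzero. A zero row $i$ of $\Amn$ is also a zero row of $\Bm$, so column $i$ of $\pinv{\Bm}$ vanishes. The mismatched diagonal entries are therefore annihilated, exactly as in the identity $\pinv{\Amn}\DL[\Dmp\Amn]=\pinv{\Amn}\Dmpi\DL[\Amn]$ noted before the theorem, applied here to $\Bm$. This yields
\[
\linv{(\Dm\Amn)}=\pinv{\Bm}\,\Dmui\Dmpi\Dm_A=\pinv{\Bm}\Dm_A\Dmi=\linv{\Amn}\Dmi ,
\]
using $\Dmui\Dmpi=\Dmi$ and the commutativity of diagonal matrices. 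The points needing care are this zero-row argument and the handling of the sign/phase part $\Dmu$ via Eq.(\ref{lduinv}) combined with unitary consistency of $\pinv{(\cdot)}$.
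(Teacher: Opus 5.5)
Your proof is correct and follows essentially the same route as the paper. The Penrose identities for $\DL[\Amn]\Amn$ give the two generalized-inverse conditions, a polar factorization of $\Dm$ combined with the invariance properties of $\DL$ and the unitary consistency of the Moore--Penrose inverse gives unit consistency, and rank follows from the nonsingularity of $\DL[\Amn]$. Your explicit handling of zero rows (the vanishing columns of $\pinv{(\DL[\Amn]\Amn)}$ absorbing the mismatched entries of $\DL[\Dmp\Amn]$) is more careful than the paper's proof, which writes $\DL[\Dmp\Dmu\Amn]=\Dmpi\DL[\Amn]$ directly and relies on the remarks preceding the theorem to justify it.
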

\begin{proof}
The first two generalized inverse properties can be established from the corresponding properties
of the Moore-Penrose inverse as: 
\begin{eqnarray}
\Amn\linv{\Amn}\Amn & = & \Amn \cdot \left\{\pinv{\left(\DL[\Amn]\cdot\Amn\right)}\cdot\DL[\Amn]\right\} \cdot \Amn \\
~ & = & \Amn \cdot \left\{\pinv{\left(\DL[\Amn]\cdot\Amn\right)}\cdot \left(\DL[\Amn]\cdot \Amn\right)\right\} \\
~ & = & (\inv{\DL[\Amn]}\cdot\DL[\Amn])\cdot \Amn \cdot \left\{\pinv{\left(\DL[\Amn]\cdot\Amn\right)}\cdot 
              \left(\DL[\Amn]\cdot \Amn\right)\right\} \\
~ & = & \inv{\DL[\Amn]}\cdot \left\{\underline{\left(\DL[\Amn]\cdot\Amn\right) \cdot \pinv{\left(\DL[\Amn]\cdot\Amn\right)}\cdot 
              \left(\DL[\Amn]\cdot \Amn\right)}\right\} \\
~ & = & \inv{\DL[\Amn]}\cdot \left(\DL[\Amn]\cdot \Amn\right)\\
~ & = & \Amn
\end{eqnarray}
and
\begin{eqnarray}
\linv{\Amn}\Amn\linv{\Amn} & = & \left\{\pinv{\left(\DL[\Amn]\cdot\Amn\right)}\cdot\DL[\Amn]\right\}\cdot\Amn\cdot
                                                        \left\{\pinv{\left(\inv{\DL[\Amn]}\Amn\right)}\cdot\DL[\Amn]\right\}  \\
~ & = & \left\{\underline{\pinv{\left(\DL[\Amn]\cdot\Amn\right)}\cdot\left(\DL[\Amn]\cdot\Amn\right)\cdot
                                                        \pinv{\left(\DL[\Amn]\cdot\Amn\right)}}\right\}\cdot \DL[\Amn]  \\
~ & = &  \pinv{\left(\DL[\Amn]\cdot\Amn\right)} \cdot \DL[\Amn] \\
~ & = & \linv{\Amn}
\end{eqnarray}
The left unit-consistency condition $\linv{(\Dm\Amn)}=\linv{\Amn}\Dmi$, for any nonsingular diagonal
matrix $\Dm$, can be established using a polar 
decomposition $\Dm=\Dmp\Dmu$:
\begin{eqnarray}
   \Dmp & = & \Abs[\Dm] \\
   \Dmu & = &  \Dm\Dmpi
\end{eqnarray}
and exploiting unitary-consistency of the Moore-Penrose 
inverse, i.e., $\pinv{(\Um\Amn)}=\pinv{\Amn}\Umi$, and 
commutativity of $\DL[\cdot]$ with other diagonal matrices:

\begin{eqnarray}
\linv{(\Dm\Amn)} & = &  \pinv{\left(\DL[\Dm\Amn]\cdot\Dm\Amn\right)}\cdot\DL[\Dm\Amn] \\
~ & = & \pinv{\left(\DL[\Dmp\Dmu\Amn]\cdot\Dmp\Dmu\Amn\right)}\cdot\DL[\Dmp\Dmu\Amn] \\
~ & = & \pinv{\left(\DL[\Amn]\cdot\Dmpi\cdot\Dmp\Dmu\Amn\right)}\cdot\Dmpi\cdot\DL[\Amn]  \\
~ & = & \pinv{\left(\DL[\Amn]\cdot(\Dmpi\Dmp)\cdot\Dmu\Amn\right)}\cdot\Dmpi\cdot\DL[\Amn]  \\
~ & = & \pinv{\left(\DL[\Amn]\cdot\Dmu\Amn\right)}\cdot\Dmpi\cdot\DL[\Amn] \label{lmpua}\\
~ & = & \pinv{\left(\DL[\Amn]\cdot\Amn\right)}\cdot\Dmui\cdot\Dmpi\cdot\DL[\Amn] \label{lmpub}\\
~ & = & \pinv{\left(\DL[\Amn]\cdot\Amn\right)}\cdot(\Dmui\Dmpi)\cdot\DL[\Amn] \\
~ & = & \pinv{\left(\DL[\Amn]\cdot\Amn\right)}\cdot\Dmi\cdot\DL[\Amn] \\
~ & = & \left\{\pinv{\left(\DL[\Amn]\cdot\Amn\right)}\cdot\DL[\Amn]\right\}\cdot\Dmi\\
~ & = & \linv{\Amn} \Dmi .
\end{eqnarray}
Lastly, the rank-consistency condition, $\mbox{rank}[\linv{\Amn}] =  \mbox{rank}[\Amn]$,
is satisfied as every operation performed according to
Lemma~\ref{dconst} preserves the rank of the original matrix.
In particular, the rank consistency of $\linv{\Amn}$ derives from the fact
that $\mbox{rank}[\Apinv]  =  \mbox{rank}[\Amn]$.
\end{proof}

A right unit-consistent generalized inverse clearly can be 
derived analogously or  in terms
of the already-defined left operator as
\begin{equation}
   \rinv{\Amn} ~ \doteq ~ \left(\linv{(\Amn^{\mbox{\tiny T}})}\right)^{\mbox{\tiny T}}.
\end{equation}
In terms of the linear model of Eq.(\ref{linmod}) for determining values
for parameters $\av$,
\begin{eqnarray}
     \yv & = & \Amn\cdot\av \nonumber \\
       ~ & \Downarrow & ~ \nonumber \\
     \av & = & \Atinv\cdot\yv \nonumber
\end{eqnarray}
the inverse $\Atinv$ could be instantiated with 
{\em either} $\linv{\Amn}$ or $\rinv{\Amn}$ to
provide, respectively, consistency with respect to
the application of a change of units to $\yv$ or
a change of units to $\av$ -- {\em but not both}. 

\section{UC Generalized Inverse for Elemental-Nonzero Matrices}
\label{nzginv}

The derivations of separate left and right UC inverses 
from the previous section cannot
be applied to achieve general unit consistency, i.e., to obtain
a UC generalized inverse
$\Aginv$ which satisfies
\begin{equation}
   \ginv{(\Dm\Amn\Em)} ~ = ~ \Emi  \Aginv \Dmi
\end{equation}
for arbitrary nonsingular diagonals $\Dm$ and $\Em$. However, a 
{\em joint} characterization of the left and right diagonal transformations
can provide a basis for doing so.

\begin{lemma}
\label{daex}
The transformation of an $m\times n$ matrix $\Amn$ as $\Dm\Amn\Em$, with $m\times m$
diagonal $\Dm$ and $n\times n$ diagonal $\Em$, is equivalent to a Hadamard
(elementwise) matrix product $\Xm\circ\Amn$ for some rank-1 matrix $\Xm$.
\end{lemma}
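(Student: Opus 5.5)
The plan is to compute the $(i,j)$ entry of $\Dm\Amn\Em$ directly and then read off the rank-1 matrix. Write $\Dm=\Diag(\mathbf{d})$ with $\mathbf{d}\in\mathbb{C}^m$ (or the appropriate division algebra) and $\Em=\Diag(\mathbf{e})$ with $\mathbf{e}$ of length $n$. Since both multiplicands are diagonal, only one term survives in each sum defining the product:
\begin{equation*}
   (\Dm\Amn\Em)(i,j) ~=~ \sum_{k,l}\Dm(i,k)\,\Amn(k,l)\,\Em(l,j) ~=~ d_i\,\Amn(i,j)\,e_j .
\end{equation*}
Define $\Xm\doteq\mathbf{d}\,\mathbf{e}^{\mbox{\tiny T}}$, i.e., $\Xm(i,j)=d_i e_j$. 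As an outer product of two vectors, $\Xm$ has rank at most one. If $\Dm$ and $\Em$ are nonsingular, as in the unit-consistency setting, all $d_i,e_j$ are nonzero, so $\Xm\neq 0$ and its rank is exactly one.

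The main obstacle is commutativity. Over the reals or complexes, $d_i\Amn(i,j)e_j=(d_ie_j)\Amn(i,j)=(\Xm\circ\Amn)(i,j)$ immediately, which proves the statement. Over the quaternions, however, the factors need not commute, so I would handle this case with some care. The first approach I would try is to define the Hadamard product in the two-sided form $(\Xm\circ\Amn)(i,j)=d_i\Amn(i,j)e_j$ with $\Xm$ represented by the pair $(\mathbf{d},\mathbf{e})$. Alternatively, I would restrict to real (in particular positive) diagonal scalings, which commute with everything; the later constructions, via the polar decomposition $\Dm=\Dmp\Dmu$ used in Theorem~\ref{linvt}, only need this case.

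For the converse direction, which shows that the correspondence is an equivalence, I would argue as follows. Any rank-1 matrix $\Xm$ can be written $\Xm=\mathbf{d}\,\mathbf{e}^{\mbox{\tiny T}}$. Setting $\Dm=\Diag(\mathbf{d})$ and $\Em=\Diag(\mathbf{e})$ then recovers $\Xm\circ\Amn=\Dm\Amn\Em$ by the same entrywise identity. When $\Xm$ has no zero entries, $\mathbf{d}$ and $\mathbf{e}$ have no zero entries, so $\Dm$ and $\Em$ are nonsingular. This completes the plan.
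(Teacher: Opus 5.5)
Your argument is correct and is essentially the paper's: the paper also takes $\Xm=\dmc\enr$, reaching it by writing $\Dm\Amn\Em=(\dmc\onr)\circ\Amn\circ(\omc\enr)$ and merging the two rank-1 Hadamard factors, which is your entrywise identity $d_i\Amn(i,j)e_j=(d_ie_j)\Amn(i,j)$ in matrix form. Your caveats are well founded. Rank exactly one needs nonzero scalings, and the paper's merging step silently uses commutativity, so the literal statement can fail over the quaternions when both scalings have non-real entries (with quaternion units $\mathrm{i},\mathrm{j}$, taking $\Dm=\mbox{diag}(1,\mathrm{j})$, $\Em=\mbox{diag}(\mathrm{j},1)$, $\Amn=\left[\begin{smallmatrix}1&1\\ \mathrm{i}&1\end{smallmatrix}\right]$ forces $\Xm=\left[\begin{smallmatrix}\mathrm{j}&1\\ 1&\mathrm{j}\end{smallmatrix}\right]$, which has rank two); your fallback to real (positive) diagonals, which is all the paper ever uses, is therefore the right resolution, whereas redefining $\circ$ would only change the statement.
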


\begin{proof}
Letting $\dmc=\Diag[\Dm]$ and $\enc=\Diag[\Em]$, the matrix product $\Dm\Amn\Em$ can be expressed as
\begin{eqnarray}
   \Dm\Amn\Em & = & (\dmc\onr)\circ\Amn\circ(\omc\emr) \\
   ~                    & = & \left\{(\dmc\onr)\circ(\omc\emr)\right\}\circ\Amn \\ 
   ~                   & = & (\dmc\enr)\circ\Amn
\end{eqnarray}
where $\onr$ is a row vector of $n$ ones and
$\omc$ is a column vector of $m$ ones.
Letting $\Xm=\dmc\enr$ completes the proof.
\end{proof}

\begin{definition}
\label{defdg}
For an $m\times n$ matrix $\Amn$, left and right {\em general-diagonal scale functions} 
$\DGL[\Amn]\in\mathbb{R}_+^{m\times m}$ and
$\DGR[\Amn]\in\mathbb{R}_+^{n\times n}$
are defined as jointly satisfying the following 
for all conformant positive diagonal matrices $\Dmp$ and $\Emp$, unitary diagonals $\Dmu$ and $\Dmv$, 
and permutations $\Perm$ and $\Qm$:
\begin{eqnarray}
   & ~ & \DGL[\Dmp\Amn\Emp]\cdot(\Dmp\Amn\Emp)\cdot\DGR[\Dmp\Amn\Emp]
                      ~ = ~ \DGL[\Amn]\cdot\Amn\cdot\DGR[\Amn] \label{defdg1}\\
   & ~ & \DGL[\Perm\Amn\Qm]\cdot(\Perm\Amn\Qm)\cdot\DGR[\Perm\Amn\Qm]
                      ~ = ~ \Perm\cdot\left\{\DGL[\Amn]\cdot\Amn\cdot\DGR[\Amn]\right\}\cdot\Qm \label{defdg2}\\
   & ~ & \DGL[\Dmu\Amn\Dmv] ~ = ~ \DGL[\Amn], \label{defdg3}\\
   & ~ & \DGR[\Dmu\Amn\Dmv] ~ = ~ \DGR[\Amn] \label{defdg4}
\end{eqnarray}
The function $\DG[\Amn]$ is defined to be the rank-1 matrix guaranteed by
Lemma~\ref{daex} 
\begin{equation}
   \DG[\Amn] \circ \Amn ~ \equiv ~ \DGL[\Amn]\cdot\Amn\cdot\DGR[\Amn]
\end{equation}
i.e.,
\begin{equation}
   \DG[\Amn] ~ = ~ \Diag[\DGL[\Amn]]\cdot\Diag[\DGR[\Amn]]^{\mbox{\tiny T}}
\end{equation}
\end{definition}

\begin{definition}
A matrix $\Amn$ is defined to be an {\em elemental-nonzero matrix} if and only if
it does not have any element equal to zero. 
\end{definition}

The following lemma uses the elementwise matrix functions $\Log[\cdot]$ and $\Exp[\cdot]$,
where $\Log[\Amn]$ represents the result of taking the logarithm of the magnitude
of each element of $\Amn$ and $\Exp[\Amn]$ represents the taking of the
exponential of every element of $\Amn$.

\begin{lemma}
\label{nonzerodef}
Existence of a general diagonal scale function according to Definition~\ref{defdg} for
arguments without zero elements is established
by instantiating $\DGL[\Amn]$ and $\DGR[\Amn]$ as
\begin{eqnarray}
   \DGL[\Amn] & = & \Diag[\xmc] \\
   \DGR[\Amn] & = & \Diag[\ync] \\
   ~ & \textnormal{for} & ~ \nonumber \\
   \xmc\cdot\ynr & = &  \DG[\Amn] ~ = ~ \Exp\left[ \Jm\Lmn\Jn - \Lmn\Jn - \Jm\Lmn  \right] 
\end{eqnarray}
where $\Lmn=\Log[\Amn]$ and $\Jm$ has all elements equal to $1/m$ and $\Jn$ has all 
elements equal to $1/n$.
\end{lemma}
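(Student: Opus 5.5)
We prove the lemma by working entirely in the log domain. There, positive diagonal scalings become additive rank-structured perturbations, and the formula $\Jm\Lmn\Jn - \Lmn\Jn - \Jm\Lmn$ becomes a projection that annihilates them.

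\emph{Step 1: the candidate is rank-1 and strictly positive.} Write $\Lmn=\Log[\Amn]$, which is well defined because $\Amn$ is elemental-nonzero. The three terms have the following structure:
\begin{itemize}
\item $\Lmn\Jn$ has constant rows; its $(i,j)$ entry is the mean $r_i$ of row $i$ of $\Lmn$.
\item $\Jm\Lmn$ has constant columns; its $(i,j)$ entry is the column mean $c_j$.
\item $\Jm\Lmn\Jn$ is the constant matrix with entry equal to the grand mean $g$.
\end{itemize}
The exponent therefore has entries $g-r_i-c_j$, and $\Exp$ of it factors as $x_i y_j$ with
\[
x_i=e^{g/2-r_i}, \qquad y_j=e^{g/2-c_j}.
\]
Both vectors are strictly positive, so $\DGL[\Amn]=\Diag[\xmc]$ and $\DGR[\Amn]=\Diag[\ync]$ are positive diagonal matrices. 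The split of $g$ between $\xmc$ and $\ync$ is arbitrary but harmless, since only the product $\DG[\Amn]$ enters (\ref{defdg1}) and (\ref{defdg2}). For (\ref{defdg3}) and (\ref{defdg4}) we fix this symmetric split so that the factors are determined by $\Lmn$ alone.

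\emph{Step 2: scale invariance, Eq.(\ref{defdg1}).} By Lemma~\ref{daex}, $\Dmp\Amn\Emp=(\dmc\enr)\circ\Amn$ with positive vectors. Hence
\[
\Log[\Dmp\Amn\Emp]=\Lmn+\mathbf{a}\onr+\omc\mathbf{b}^{\textnormal{\tiny T}},
\]
where $a_i=\log\Dmp(i,i)$ and $b_j=\log\Emp(j,j)$. Define the linear operator $T(\Lmn)=\Lmn+\Jm\Lmn\Jn-\Lmn\Jn-\Jm\Lmn$; this is the log of $\DG[\Amn]\circ\Abs[\Amn]$. The key computation, which is the heart of the proof, is that $T$ kills both perturbation types. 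For $\mathbf{a}\onr$, use $\onr\Jn=\onr$ and $\Jm\mathbf{a}\onr=\bar a\,\omc\onr$; the four terms then cancel. The case $\omc\mathbf{b}^{\textnormal{\tiny T}}$ is symmetric.

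Moreover, $\DG$ modifies only magnitudes, and the phase of each entry of $\Dmp\Amn\Emp$ equals that of $\Amn$ because the scalings are positive. It follows that
\[
\DG[\Dmp\Amn\Emp]\circ(\Dmp\Amn\Emp)=\DG[\Amn]\circ\Amn,
\]
which is (\ref{defdg1}) via Lemma~\ref{daex}.

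\emph{Step 3: permutation consistency, Eq.(\ref{defdg2}).} We have $\Log[\Perm\Amn\Qm]=\Perm\Lmn\Qm$. A permutation fixes the all-ones vectors, so $\Perm\Jm=\Jm\Perm=\Jm$ and likewise for $\Qm$ with $\Jn$. Each term of the exponent therefore transforms as $\Lmn\mapsto\Perm\Lmn\Qm$. Because $\Exp$ and Hadamard products commute with simultaneous row and column permutations, (\ref{defdg2}) follows.

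\emph{Step 4: unitary-diagonal invariance, Eqs.(\ref{defdg3}) and (\ref{defdg4}).} The map $\Log$ depends only on entry magnitudes, and $|(\Dmu\Amn\Dmv)(i,j)|=|\Amn(i,j)|$. Hence $\Lmn$ is unchanged, and with the fixed symmetric split so are $\xmc$ and $\ync$ individually.

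The main obstacle is the nonuniqueness of the factorization $\xmc\ynr$. It must be resolved by a canonical choice such as the symmetric split above so that (\ref{defdg3}) and (\ref{defdg4}) hold as stated for the individual factors. The cancellation in Step 2 is the substantive content, but it is a direct calculation.
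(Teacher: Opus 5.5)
Your proposal is correct and follows essentially the same route as the paper. The paper uses the same symmetric split of the grand mean: its $\umc=\frac{1}{n}(\frac{1}{2}\Jm-\Imm)\Lmn\onc$ has entries $g/2-r_i$, exactly your $\log x_i$. For Eq.~(\ref{defdg1}) it uses the same log-domain cancellation of $\mathbf{a}\onr+\omc\mathbf{b}^{\textnormal{\tiny T}}$, phrased as $\DG[\Dmp\Amn\Emp]=\Dmpi\DG[\Amn]\Empi$, and the same indexing and magnitude arguments for the remaining conditions; your explicit handling of the factor split and of entry phases only spells out points the paper leaves implicit.
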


\begin{proof}
First it must be shown that $\Exp\left[ \Jm\Lmn\Jn - \Lmn\Jn - \Jm\Lmn  \right]$ is a rank-1 matrix. 
This can be achieved by expanding as
\begin{eqnarray}
       \Jm\Lmn\Jn - \Lmn\Jn - \Jm\Lmn  & = &  
                       (\frac{1}{2}\Jm\Lmn\Jn - \Lmn\Jn) +  (\frac{1}{2}\Jm\Lmn\Jn - \Jm\Lmn)\\
      ~ & = &   (\frac{1}{2}\Jm\Lmn - \Lmn) \Jn + \Jm(\frac{1}{2}\Lmn\Jn - \Lmn)\\
      ~ & = &  \umc\onr + \omc\vnr 
\end{eqnarray}
where 
\begin{eqnarray}   
     \umc & = & \frac{1}{n}\left(\frac{1}{2}\Jm - \Imm\right)\Lmn \cdot \onc \\
     \vnr & = &  \omr \cdot \Lmn\left(\frac{1}{2}\Jn - \Imn\right)/m
\end{eqnarray}
and then noting that the elementwise exponential of  $\umc\onr + \omc\vnr$ is the strictly positive
rank-1 matrix $\Exp[\umc]\cdot\Exp[\vnr]$, i.e., $\DGL[\Amn]=\Diag[\Exp[\umc]]$ and
$\DGR[\Amn]=\Diag[\Exp[\vmc]]$, which confirms existence and strict positivity as required.
Eq.(\ref{defdg1}) can be established by observing that
\begin{equation}
   \DGL[\Dmp\Amn\Emp]\cdot(\Dmp\Amn\Emp)\cdot\DGR[\Dmp\Amn\Emp] ~ \equiv ~ \DG[\Dmp\Amn\Emp]\circ(\Dmp\Amn\Emp)
\end{equation}
and letting 
$\dmc=\Diag[\Dmp]$,
$\enc=\Diag[\Emp]$, $\umc=\Log[\dmc]$,
$\vnc=\Log[\emc]$, and $\Lmn=\Log[\Amn]$:
\begin{eqnarray}   
    \DG[\Dmp\Amn\Emp] & = & \DG[ ~ (\dmc\onr) \circ \Amn \circ (\omc\enr) ~ ] \\
    ~ & = &  \DG[~ \Exp[\umc\onr]\circ \Amn \circ \Exp[\omc\vnr] ~ ] \\
    ~ & = & \Exp[~ \Jm(\umc\onr + \Lmn+ \omc\vnr)\Jn\\
    ~ & ~ &  ~ \phantom{\Exp[} - (\umc\onr + \Lmn+ \omc\vnr)\Jn  \\
    ~ & ~ &  ~ \phantom{\Exp[}  - \Jm(\umc\onr + \Lmn+ \omc\vnr) ~]\\
    ~ & = & \Exp[~ (\Jm\cdot\umc\onr + \Jm\Lmn\Jn + \omc\vnr\cdot\Jn) \\
    ~ & ~ &  ~ \phantom{\Exp[} - (\umc\onr + \Lmn\Jn + \omc\vnr\cdot\Jn) \\
    ~ & ~ &  ~ \phantom{\Exp[}  -  (\Jm\cdot\umc\onr + \Jm\Lmn + \omc\vnr) ~] \\
    ~ & = & \Exp[~ (-\umc\onr) + (\Jm\Lmn\Jn-\Lmn\Jn-\Jm\Lmn) + (-\omc\vnr) ~] \\
    ~ & = & \Exp[-\umc\onr] ~ \circ ~ \DG[\Amn]  ~ \circ ~ \Exp[- \omc\vnr] \\
    ~ & = & \Dmpi\cdot\DG[\Amn]\cdot\Empi  \label{disei}
\end{eqnarray}
where the last step recognizes that $-\umc=\Log[\Diag[\Dmpi]]$ and
$-\vnc=\Log[\Diag[\Empi]]$. The identity of Eq.(\ref{defdg1}) can then be shown as:
\begin{eqnarray}
      \DGL[\Dmp\Amn\Emp]\cdot(\Dmp\Amn\Emp)\cdot\DGR[\Dmp\Amn\Emp] & = & \DG[\Dmp\Amn\Emp]\circ(\Dmp\Amn\Emp) \\
      ~ & = & (\Dmpi\cdot\DG[\Amn]\cdot\Empi) \circ (\Dmp\Amn\Emp) \\
      ~ & = & \DG[\Amn]\circ\Amn \\
      ~ & = & \DGL[\Amn]\cdot\Amn\cdot\DGR[\Amn]
\end{eqnarray}
Eq.(\ref{defdg2})  holds as the indexing of the rows and columns of 
$\DGL[\Amn]$ and $\DGR[\Amn]$ (and $\DG[\Amn]$) is the same as that of $\Amn$.
Eqs.(\ref{defdg3}) and~(\ref{defdg4}) hold directly because Lemma~\ref{nonzerodef} 
only involves functions of the
absolute values of the elements of the argument matrix $\Amn$.
\end{proof}

\begin{theorem}
\label{ginvtp}
For an elemental-nonzero $m\times n$ matrix $\Amn$, the operator 
\begin{equation}
   \Aginv ~ \doteq ~ \DGR[\Amn]\cdot
         \pinv{\left(\DGL[\Amn]\cdot\Amn\cdot\DGR[\Amn]\right)}\cdot\DGL[\Amn]
         \label{ginvdef}
\end{equation}
satisfies for any nonsingular diagonal matrices $\Dm$ and $\Em$:
\begin{eqnarray}
   \Amn\Aginv\Amn & = & \Amn, \\
   \Aginv\Amn\Aginv & = & \Aginv, \\
   \ginv{(\Dm\Amn\Em)} & = & \Emi\Aginv\Dmi, \label{ginvuc}\\
   \mbox{rank}[\Aginv] & = & \mbox{rank}[\Amn] 
\end{eqnarray}
and is therefore a general unit-consistent generalized inverse.
\end{theorem}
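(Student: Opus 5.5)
The plan mirrors the proof of Theorem~\ref{linvt}, now with scalings on both sides. Write $\Dm_1=\DGL[\Amn]$, $\Dm_2=\DGR[\Amn]$ and $\Sm=\Dm_1\Amn\Dm_2$, so that $\Aginv=\Dm_2\pinv{\Sm}\Dm_1$. Both diagonals are strictly positive by Lemma~\ref{nonzerodef}, hence invertible. The two generalized-inverse identities then follow from the Moore-Penrose identities for $\Sm$. First,
\begin{equation*}
\Amn\Aginv\Amn ~=~ \inv{\Dm_1}\,(\Sm\pinv{\Sm}\Sm)\,\inv{\Dm_2} ~=~ \inv{\Dm_1}\Sm\inv{\Dm_2} ~=~ \Amn ,
\end{equation*}
obtained by inserting $\inv{\Dm_1}\Dm_1$ on the left and $\Dm_2\inv{\Dm_2}$ on the right. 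Second,
\begin{equation*}
\Aginv\Amn\Aginv ~=~ \Dm_2\,(\pinv{\Sm}\Sm\pinv{\Sm})\,\Dm_1 ~=~ \Aginv .
\end{equation*}
Rank consistency is immediate. Multiplication by the nonsingular diagonals $\Dm_1,\Dm_2$ preserves rank, so
\begin{equation*}
\mbox{rank}[\Aginv] ~=~ \mbox{rank}[\pinv{\Sm}] ~=~ \mbox{rank}[\Sm] ~=~ \mbox{rank}[\Amn] .
\end{equation*}

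The substantive step is the unit-consistency property Eq.(\ref{ginvuc}). I would first take polar decompositions $\Dm=\Dmp\Dmu$ and $\Em=\Emp\Emu$, with $\Dmp=\Abs[\Dm]$ and $\Emp=\Abs[\Em]$, and note that $\Dm\Amn\Em$ is again elemental-nonzero. By Eqs.(\ref{defdg3}) and~(\ref{defdg4}) the unitary diagonals can be stripped from the arguments of the scale functions. I then need the transformation rule for the individual factors, not merely for the product in Eq.(\ref{defdg1}). For this I would use Eq.(\ref{disei}) from the proof of Lemma~\ref{nonzerodef}, which gives $\DG[\Dmp\Amn\Emp]=\Dmpi\,\DG[\Amn]\,\Empi$ as rank-1 matrices.

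This is the main obstacle. A rank-1 factorization $\xmc\ynr$ is determined only up to a scalar $c$ with $\xmc\to c\,\xmc$ and $\ync\to\ync/c$. Consequently Eq.(\ref{disei}) by itself only yields $\DGL[\Dmp\Amn\Emp]=c\,\Dmpi\DGL[\Amn]$ and $\DGR[\Dmp\Amn\Emp]=c^{-1}\Empi\DGR[\Amn]$. I would handle this in one of two ways. The first is to use the explicit instantiation in the proof of Lemma~\ref{nonzerodef}, $\DGL=\Diag[\Exp[\umc]]$, and check directly that $\umc$ shifts by exactly $-\Log[\Diag[\Dmp]]$, which gives $c=1$. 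The second, more robust, way is to observe that the scalar cancels in the final formula anyway, so any consistent choice of factorization suffices.

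With these relations in hand, the computation runs as follows. Substituting gives
\begin{equation*}
\ginv{(\Dm\Amn\Em)} ~=~ c^{-1}\Empi\Dm_2\cdot\pinv{\left(\Dm_1\Dmu\Amn\Emu\Dm_2\right)}\cdot c\,\Dmpi\Dm_1 .
\end{equation*}
Here the positive diagonals cancel inside the pseudoinverse because diagonal matrices commute, leaving $\Dm_1\Dmu\Amn\Emu\Dm_2=\Dmu\Sm\Emu$, and the scalars $c$ and $c^{-1}$ cancel outside. Unitary consistency of the Moore-Penrose inverse, Eq.(\ref{udist}), then gives $\pinv{(\Dmu\Sm\Emu)}=\Emui\pinv{\Sm}\Dmui$. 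Commuting the diagonals once more produces
\begin{equation*}
\ginv{(\Dm\Amn\Em)} ~=~ (\Empi\Emui)\,\Dm_2\pinv{\Sm}\Dm_1\,(\Dmui\Dmpi) ~=~ \Emi\Aginv\Dmi ,
\end{equation*}
which is Eq.(\ref{ginvuc}).
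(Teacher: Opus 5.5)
Your proof is correct and follows the paper's argument step for step: Moore--Penrose identities for $\Sm$, polar decompositions, stripping the unitary diagonals, cancelling the positive diagonals, and diagonal-unitary consistency of the pseudoinverse. Your remark that Eq.~(\ref{disei}) fixes $\DGL$ and $\DGR$ individually only up to reciprocal scalars $c,c^{-1}$, which cancel, is a real improvement, since the paper silently uses $\DGL[\Dmp\Amn\Emp]=\DGL[\Amn]\Dmpi$ factor by factor.

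Rely on that cancellation rather than your first option, though. Let $\bar{a}$ and $\bar{b}$ be the means of $\Log[\Diag[\Dmp]]$ and $\Log[\Diag[\Emp]]$. With the explicit instantiation of Lemma~\ref{nonzerodef}, $\umc$ shifts by $-\Log[\Diag[\Dmp]]+\frac{1}{2}(\bar{a}-\bar{b})\omc$. So $c=e^{(\bar{a}-\bar{b})/2}\neq 1$ in general; for example, $\Amn=[1]$, $\Dmp=[2]$, $\Emp=[1]$ gives $\DGL[\Dmp\Amn\Emp]=2^{-1/2}$, not $1/2$.
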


\begin{proof}
The first two generalized inverse properties can be established from the corresponding properties
of the MP-inverse as: 
\begin{eqnarray}
\Amn\Aginv\Amn & = & \Amn\cdot \left\{\DGR[\Amn]\cdot
          \pinv{\left(\DGL[\Amn]\cdot\Amn\cdot\DGR[\Amn]\right)} 
          \cdot\DGL[\Amn]\right\} \cdot \Amn \\
~ & = & (\inv{\DGL[\Amn]}\cdot\DGL[\Amn])\cdot \\
~ & ~ & \phantom{AAA} \Amn\cdot\left\{\DGR[\Amn]
                         \cdot\pinv{\left(\DGL[\Amn]\cdot\Amn\cdot\DGR[\Amn]\right)}
                         \cdot\DGL[\Amn]\right\}\cdot\Amn \\
 ~ & ~ &  \phantom{AAAAAA} \cdot(\DGR[\Amn]\cdot\inv{\DGR[\Amn]}) \\
~ & = & \inv{\DGL[\Amn]}\cdot \\
 ~ & ~ &  \phantom{A} \underline{(\DGL[\Amn]\cdot\Amn\cdot\DGR[\Amn])
          \cdot\pinv{\left(\DGL[\Amn]\cdot\Amn\cdot\DGR[\Amn]\right)}\cdot
          (\DGL[\Amn]\cdot\Amn\cdot\DGR[\Amn])} \\
 ~ & ~ &  \phantom{AA} \cdot\inv{\DGR[\Amn]} \\
~ & = & \inv{\DGL[\Amn]}\cdot(\DGL[\Amn]\cdot\Amn\cdot\DGR[\Amn])
          \cdot\inv{\DGR[\Amn]} \\ 
~ & = & \Amn
\end{eqnarray}
and
\begin{eqnarray}
\Aginv\Amn\Aginv & = & \left\{\DGR[\Amn]\cdot
          \pinv{\left(\DGL[\Amn]\cdot\Amn\cdot\DGR[\Amn]\right)} 
          \cdot\DGL[\Amn]\right\}\cdot\Amn \\
 ~ & ~ &  ~ ~ ~ ~ ~ ~\cdot \left\{\DGR[\Amn]
          \cdot\pinv{\left(\DGL[\Amn]\cdot\Amn\cdot\DGR[\Amn]\right)}
          \cdot\DGL[\Amn]\right\} \\
~ & = & \DGR[\Amn]\\
 ~ & ~ &  \phantom{A}  \cdot \pinv{\left(\DGL[\Amn]\cdot\Amn\cdot\DGR[\Amn]\right)}\cdot 
          \underline{\left(\DGL[\Amn]\cdot\Amn\cdot\DGR[\Amn]\right)
          \cdot\pinv{\left(\DGL[\Amn]\cdot\Amn\cdot\DGR[\Amn]\right)}} \\
  ~ & ~ &  \phantom{AA} \cdot\DGL[\Amn] \\
~ & = & \DGR[\Amn]\cdot
        \pinv{\left(\DGL[\Amn]\cdot\Amn\cdot\DGR[\Amn]\right)}
        \cdot\DGL[\Amn] \\
~ & = & \Aginv
\end{eqnarray}

The general UC condition $\ginv{(\Dm\Amn\Em)}=\Emi  \Aginv \Dmi$, for any nonsingular diagonal
matrix $\Dm$, can be established using a polar decompositions $\Dm=\Dmp\Dmu$ and $\Em=\Emp\Emu$:
\begin{eqnarray}
\ginv{(\Dm\Amn\Em)} & = &  \DGR[\Dm\Amn\Em]\cdot
         \pinv{\left(\DGL[\Dm\Amn\Em]\cdot(\Dm\Amn\Em)\cdot\DGR[\Dm\Amn\Em]\right)}\cdot\DGL[\Dm\Amn\Em] \\
~ & = & \DGR[\Dmp\Amn\Emp]\cdot
         \pinv{\left(\DGL[\Dmp\Amn\Emp]\cdot(\Dm\Amn\Em)\cdot\DGR[\Dmp\Amn\Emp]\right)}\cdot\DGL[\Dmp\Amn\Emp] \\
~ & = & \Empi\cdot\DGR[\Amn]\cdot 
              \pinv{\left(\DGL[\Amn]\cdot\Dmpi\cdot(\Dm\Amn\Em)\cdot\Empi\cdot\DGR[\Amn]\right)}
              \cdot\DGL[\Amn]\cdot\Dmpi \\
~ & = &  \Empi\cdot\DGR[\Amn]\cdot 
              \pinv{\left(\DGL[\Amn]\cdot\Dmu\cdot\Amn\cdot\Emu\cdot\DGR[\Amn]\right)}
              \cdot\DGL[\Amn]\cdot\Dmpi     \label{gpinvua}  \\
~ & = & \Empi\cdot\DGR[\Amn]\cdot\Emui\cdot
              \pinv{\left(\DGL[\Amn]\cdot\Amn\cdot\DGR[\Amn]\right)}
              \cdot\Dmui\cdot\DGL[\Amn]\cdot\Dmpi    \label{gpinvub}   \\
~ & = & (\Empi\cdot\Emui)\cdot\DGR[\Amn]\cdot
              \pinv{\left(\DGL[\Amn]\cdot\Amn\cdot\DGR[\Amn]\right)}
              \cdot\DGL[\Amn]\cdot(\Dmui\cdot\Dmpi) \\
~ & = & \Emi\cdot\left\{\DGR[\Amn]\cdot
              \pinv{\left(\DGL[\Amn]\cdot\Amn\cdot\DGR[\Amn]\right)}
              \cdot\DGL[\Amn]\right\}\cdot\Dmi\\
~ & = & \Emi\cdot\Aginv\cdot\Dmi .
\end{eqnarray}
The rank-consistency condition of the theorem holds exactly as 
for the proof of Theorem~\ref{linvt}.
\end{proof}

The elemental-nonzero condition of Lemma~\ref{nonzerodef} is required
to ensure the existence of the elemental logarithms for $\Lmn=\Log[\Amn]$, so
the closed-form solution for the general unit-consistent matrix
inverse of Theorem~\ref{ginvtp} is applicable only to matrices without
zero elements. In many contexts involving general matrices there
is no reason to expect any elements to be identically zero, but in
some applications, e.g., compressive sensing, zeros are structurally
enforced. Unfortunately, Lemma~\ref{nonzerodef} cannot be extended
to accommodate zeros by a simple limiting strategy; however, 
results from matrix scaling theory can be applied to derive an 
unrestricted solution.

\section{The Fully General Unit-Consistent Generalized Inverse}
\label{gensec}

Given a nonnegative matrix $\Amn\in\mathbb{R}^{m\times n}$ with 
full support, $m$ positive numbers $S_1...S_m$, and $n$ positive 
numbers $T_1...T_n$, Rothblum \& Zenios~\cite{rz92} investigated the problem
of identifying positive diagonal matrices $\Um\in\mathbb{R}^{m\times m}$ and 
$\Vm\in\mathbb{R}^{n\times n}$ such that the product of the 
nonzero elements of each row $i$ of $\Amn'=\Um\Amn\Vm$ is $S_i$ and
the product of the nonzero elements of each column $j$ of 
$\Amn'$ is $T_j$. They provided an efficient solution, referred
to in their paper as {\em Program II}, and analyzed 
its properties. Specifically, for vectors 
$\mu\in\mathbb{R}^m$ and $\eta\in\mathbb{R}^n$, defined
in their paper\footnote{As will be seen, the precise definitions of
$\mu$ and $\eta$ will prove irrelevant for purposes of this paper.}, 
they proved the following:

\begin{theorem}
\label{rztheorem} (Rothblum \& Zenios\footnote{This theorem combines results from
theorems~4.2 and~4.3 of~\cite{rz92}. The variables $\Um$ and $\Vm$ are used 
here for positive diagonal matrices purely for consistency with that paper despite 
their exclusive use elsewhere in this paper to refer to unitary matrices. Although 
not stated explicitly by Rothblum and Zenios, Program~II
is easily verified to be permutation consistent.})
The following are equivalent:
\begin{enumerate}
   \item Program II is feasible.
   \item Program II has an optimal solution.
   \item $\prod_{i=1}^{m}(S_i)^{\mu_i} ~ = ~ \prod_{j=1}^{n}(T_j)^{\eta_j}$.
\end{enumerate}
If a solution exists then the matrix $\Amn'=\Um\Amn\Vm$ is the unique positive
diagonal scaling of $\Amn$ for which the product of the nonzero elements of each row
$i$ is $S_i$ and the product of the nonzero elements of each column $j$ is $T_j$.
\end{theorem}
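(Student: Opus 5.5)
The plan is to reduce everything to a linear system by taking logarithms, since Program~II is, in the log domain, a linear feasibility/optimization problem on the bipartite support graph of $\Amn$. Let $E=\{(i,j):\Amn(i,j)\neq 0\}$, and write $x_i=\log\Um(i,i)$, $y_j=\log\Vm(j,j)$, $\ell_{ij}=\log\Amn(i,j)$ for $(i,j)\in E$, $s_i=\log S_i$, $t_j=\log T_j$. Let $\Bm$ denote the $|E|\times(m+n)$ edge-incidence matrix sending $(x,y)$ to the vector $(x_i+y_j)_{(i,j)\in E}$; we use the symbol $\Bm$ informally here for this matrix. Then the requirement that the nonzero elements of row $i$ of $\Um\Amn\Vm$ have product $S_i$ reads $\sum_{j:(i,j)\in E}(x_i+y_j)=s_i-\sum_{j}\ell_{ij}$. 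The column conditions are analogous. Stacking these gives the linear system
\begin{equation*}
\Bm^{\mbox{\tiny T}}\Bm \, (x,y) ~=~ c ,
\end{equation*}
where $c$ is the vector of right-hand sides. The key observation is that the row/column equations are exactly the rows of $\Bm^{\mbox{\tiny T}}\Bm$, because summing $x_i+y_j$ over the edges incident to vertex $i$ is $(\Bm^{\mbox{\tiny T}}\Bm(x,y))_i$. I would take Program~II to be (equivalently, up to the log change of variables) the convex problem whose stationarity conditions are this system, e.g.\ the least-squares objective $\tfrac12\|\Bm(x,y)\|^2-c^{\mbox{\tiny T}}(x,y)$. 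Then ``has an optimal solution'' and ``is feasible'' both coincide with solvability of the displayed system, since a convex quadratic that is bounded below attains its minimum and is minimized exactly at stationary points. This gives $1\Leftrightarrow 2$.

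For $\Leftrightarrow 3$, I would use $\mbox{range}(\Bm^{\mbox{\tiny T}}\Bm)=\mbox{range}(\Bm^{\mbox{\tiny T}})=\ker(\Bm)^{\perp}$, so solvability holds iff $c\perp\ker\Bm$. I would then compute $\ker\Bm$ explicitly. If $x_i+y_j=0$ on every edge, then along any path in the bipartite support graph the values alternate in sign. Hence, on each connected component $C$, $x\equiv\alpha_C$ on the rows of $C$ and $y\equiv-\alpha_C$ on its columns, and full support guarantees there are no isolated vertices. So $\ker\Bm$ is spanned by one such signed indicator vector per component. Orthogonality of $c$ to it reduces, because the $\ell_{ij}$ terms appear once in the row sums and once in the column sums and therefore cancel, to $\sum_{i\in C}s_i=\sum_{j\in C}t_j$. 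Exponentiating gives the product condition in item~3, with $\mu,\eta$ encoding the component structure; for a connected support they are all-ones vectors. This is consistent with the remark that their precise form is irrelevant to the paper.

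Uniqueness of the scaled matrix follows from the same kernel computation. Two solutions differ by an element of $\ker(\Bm^{\mbox{\tiny T}}\Bm)=\ker\Bm$, i.e.\ by $(\delta x,\delta y)$ with $\delta x_i+\delta y_j=0$ on every edge. Such a difference leaves every nonzero entry $\Um(i,i)\Amn(i,j)\Vm(j,j)$ unchanged, so $\Amn'$ is unique even though $\Um,\Vm$ are not. Permutation consistency is immediate, because the construction depends only on the labelled support graph.

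The main obstacle I expect is matching this to Rothblum \& Zenios's actual formulation of Program~II and their definitions of $\mu,\eta$, which the paper does not reproduce. My plan is to argue the equivalences directly from the log-linear system above and to identify the Rothblum--Zenios weights with the component indicators. If their $\mu,\eta$ are defined differently, the step that would need adjusting is only the translation of the orthogonality condition $c\perp\ker\Bm$ into their notation.
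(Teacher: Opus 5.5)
The paper gives no proof of this statement. It imports it from Rothblum and Zenios (their Theorems~4.2 and~4.3), and neither Program~II nor $\mu,\eta$ is ever written down. Your log-domain argument is therefore a genuinely different, self-contained route, and its core is correct:
the line-product conditions are exactly $\mathbf{B}^{\mathrm T}\mathbf{B}z=c$, and solvability is equivalent to $c\perp\ker\mathbf{B}$. Moreover, $\ker\mathbf{B}$ consists of vectors equal to $+\alpha_C$ on the rows and $-\alpha_C$ on the columns of each connected component $C$ of the support graph. The $\ell_{ij}$ terms cancel within each component, and uniqueness of $\mathbf{A}'$ follows from $\ker\mathbf{B}^{\mathrm T}\mathbf{B}=\ker\mathbf{B}$. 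Compared with the citation, this buys you three things. You get an elementary proof and a concrete meaning for $\mu,\eta$. You also get an explicit description of the non-uniqueness of $\Um,\Vm$: one free reciprocal scalar pair per component, which is what Theorem~\ref{rzcor} and Appendix~\ref{facta} handle separately. Finally, the case $S_i=T_j=1$ used in Theorem~\ref{rzcor} becomes trivial, since then $c=-\mathbf{B}^{\mathrm T}\ell\in\mathrm{range}(\mathbf{B}^{\mathrm T})$. (With $S_i=T_j=1$ your quadratic is, up to a constant, $\tfrac12\|\ell+\mathbf{B}z\|^2$. That is exactly what the \texttt{dscale} routine of Appendix~\ref{code} minimizes by alternating row and column mean subtraction of the log entries.)

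Two steps need repair.

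First, the $1\Leftrightarrow2$ step contradicts your own choice of Program~II. The unconstrained problem $\min_z \tfrac12\|\mathbf{B}z\|^2-c^{\mathrm T}z$ is always feasible, so with that choice item~1 holds unconditionally while item~3 can fail. Put the line-product conditions into the constraints instead. For example, minimize $\tfrac12\sum_{(i,j)\in E}(z_{ij}-\ell_{ij})^2$ over $z\in\mathbb{R}^{E}$ subject to $\mathbf{B}^{\mathrm T}z=(s,t)$, with $z_{ij}$ the logs of the entries of $\mathbf{A}'$. Then:
feasibility is equivalent to $(s,t)\perp\ker\mathbf{B}$, and an optimum exists whenever the program is feasible, by strong convexity. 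The KKT condition $z-\ell\in\mathrm{range}(\mathbf{B})$ says the optimizer is a diagonal scaling of $\mathbf{A}$. Conversely, any diagonal scaling meeting the constraints satisfies KKT and is therefore the unique optimizer. This gives existence and uniqueness of $\mathbf{A}'$ together. Since this paper never states Program~II, say explicitly which program you are proving the equivalences for.

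Second, your criterion is one equation $\prod_{i\in C}S_i=\prod_{j\in C}T_j$ per component. That is $K$ independent conditions when the support graph has $K$ components, while item~3 is a single equation. No fixed pair $(\mu,\eta)$ can encode $K\ge2$ conditions, so ``$\mu,\eta$ encoding the component structure'' does not work as stated. You have two options:
either assume a connected support, in which case $\mu,\eta$ are all-ones as you note, or read item~3 as required for every $(\mu,\eta)$ with $\mu_i=\eta_j$ whenever $\mathbf{A}(i,j)\neq0$. The latter is precisely your condition $c\perp\ker\mathbf{B}$.
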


Although $\Amn'$ is unique in Theorem~\ref{rztheorem}, the diagonal scaling 
matrices $\Um$ and $\Vm$ may not be. The implications of this, and the question
of existence, are addressed by the following theorem.  

\begin{theorem}
\label{rzcor}
For any nonnegative matrix $\Amn\in\mathbb{R}^{m\times n}$ with full support
there exist positive diagonal matrices $\Um\in\mathbb{R}^{m\times m}$ 
and $\Vm\in\mathbb{R}^{n\times n}$ such that the product of the 
nonzero elements of each row $i$ and column $j$ of $\Xm=\Um\Amn\Vm$ is $1$,
and $\Xm=\Um\Amn\Vm$ is the unique positive diagonal scaling of $\Amn$ which
has this property. Furthermore, if there do exist distinct positive diagonal 
matrices $\Um_1$, $\Vm_1$, $\Um_2$, and $\Vm_2$ such that 
\begin{equation}
\Xm ~=~ \Um_1\Amn\Vm_1 ~=~ \Um_2\Amn\Vm_2
\end{equation}
then $\inv{\Vm}_1\Apinv\inv{\Um}_1$ = $\inv{\Vm}_2\Apinv\inv{\Um}_2$.
\end{theorem}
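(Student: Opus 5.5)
The plan is to treat the two assertions separately: existence and uniqueness come from Theorem~\ref{rztheorem}, and the consistency of the inverse comes from a combinatorial description of every positive diagonal pair $(\Dm,\Em)$ with $\Dm\Amn\Em=\Amn$.

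For existence, instantiate Program~II with $S_i=1$ for all $i$ and $T_j=1$ for all $j$. Condition~3 of Theorem~\ref{rztheorem} then reads $\prod_i 1^{\mu_i}=\prod_j 1^{\eta_j}$, i.e.\ $1=1$. It therefore holds whatever $\mu$ and $\eta$ are, which is why their precise definitions are irrelevant here. By the equivalence in Theorem~\ref{rztheorem}, Program~II is feasible and has an optimal solution $(\Um,\Vm)$. The final clause of that theorem gives that $\Xm=\Um\Amn\Vm$ is the unique positive diagonal scaling of $\Amn$ whose nonzero row and column products all equal $1$. The full-support hypothesis is exactly the one required by that theorem.

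For the second assertion, suppose $\Um_1\Amn\Vm_1=\Um_2\Amn\Vm_2$. Set $\Dm=\inv{\Um}_1\Um_2$ and $\Em=\Vm_2\inv{\Vm}_1$, both positive diagonal, so that $\Dm\Amn\Em=\Amn$. A direct rearrangement gives
\[
\inv{\Vm}_2\Apinv\inv{\Um}_2 ~=~ \inv{\Vm}_1\,(\Emi\Apinv\Dmi)\,\inv{\Um}_1 .
\]
It therefore suffices to prove that $\Dm\Amn\Em=\Amn$ implies $\Emi\Apinv\Dmi=\Apinv$.

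Elementwise, the relation $\Dm\Amn\Em=\Amn$ says $d_i e_j=1$ whenever $\Amn(i,j)\neq 0$. Consider the bipartite graph on row indices and column indices with an edge $(i,j)$ whenever $\Amn(i,j)\ne0$. Along every edge $d_i=1/e_j$, so on each connected component $C_k$ of this graph all $d_i$ equal a common constant $c_k$ and all $e_j$ equal $1/c_k$. Full support guarantees that every row and every column lies in a component containing at least one edge, so every diagonal entry is determined in this way.

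Now choose permutations $\Perm,\Qm$ so that $\Perm\Amn\Qm$ is block diagonal, with the blocks corresponding to the components. Under the same permutations $\Dm$ and $\Em$ become block-scalar: $c_k\Imat$ and $c_k^{-1}\Imat$ on block $k$. Using unitary (hence permutation) consistency of the Moore--Penrose inverse, $\pinv{(\Perm\Amn\Qm)}=\Qm^{\mbox{\tiny T}}\Apinv\Permt$. The Moore--Penrose inverse of a block-diagonal matrix is block diagonal, with the transposed block pattern and blockwise pseudoinverses. Hence $\Emi\Apinv\Dmi$ multiplies block $k$ of $\Apinv$ by $c_k\cdot c_k^{-1}=1$, giving $\Emi\Apinv\Dmi=\Apinv$.

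The main obstacle is this last step. It requires a careful statement of the component decomposition, including rectangular blocks with non-matching row and column counts, and a check that the permutation bookkeeping transfers the block-scalar structure correctly to $\Apinv$. The algebraic reduction and the appeal to Theorem~\ref{rztheorem} are routine by comparison.
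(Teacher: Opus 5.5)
Your proposal is correct. The existence and uniqueness half is the paper's own argument: taking every $S_i=T_j=1$ makes condition~3 of Theorem~\ref{rztheorem} hold trivially. For the \emph{Furthermore} half, however, you take a genuinely different route.

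The paper (Appendix~\ref{facta}) never examines the zero pattern. Given $\Dm\Xm\Em=\Xm$ with positive diagonal $\Dm,\Em$ and a full-rank factorization $\Xm=\Fm\Gm$, it observes that $(\Dm\Fm)(\Gm\Em)$ is another full-rank factorization of $\Xm$. It applies $\Xpinv=\Gmt\inv{(\Fmt\Xm\Gmt)}\Fmt$ to this second factorization and uses $\Dm^*=\Dm$, $\Em^*=\Em$ to collapse $(\Dm\Fm)^*\Xm(\Gm\Em)^*=\Fmt(\Dm\Xm\Em)\Gmt=\Fmt\Xm\Gmt$. The identity $\Xpinv=\Em\Xpinv\Dm$ then drops out in a few lines, without full support and without any description of which scalings fix $\Xm$.

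Your argument instead characterizes all such scalings: one free constant $c_k$ per connected component of the bipartite support graph. It then shows this freedom cancels, because the pseudoinverse of a permuted direct sum is the permuted direct sum of the blockwise pseudoinverses. This is more elementary and more informative, since it tells you exactly how non-unique $\Um$ and $\Vm$ can be. The cost is some bookkeeping, but less than you fear. All you actually use is that $\Apinv(j,i)\neq 0$ only when column $j$ and row $i$ of $\Amn$ lie in the same component, so that entry is multiplied by $e_j^{-1}d_i^{-1}=c_k c_k^{-1}=1$.

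Finally, you prove the statement literally as written (with $\Apinv$). The appendix proves the variant $\Emia\Xpinv\Dmia=\Emib\Xpinv\Dmib$, which is what the UC inverse actually requires. Both are instances of the single lemma $\Dm{\bf M}\Em={\bf M}\Rightarrow\Em\pinv{{\bf M}}\Dm=\pinv{{\bf M}}$, and either argument delivers it.
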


\begin{proof}
The existence (and uniqueness) of a solution for Program~II according to 
Theorem~\ref{rztheorem} is equivalent to
\begin{equation}
   \prod_{i=1}^{m}(S_i)^{\mu_i} ~ = ~ \prod_{j=1}^{n}(T_j)^{\eta_j}
\end{equation}
which holds unconditionally, i.e., independent of $\mu$ and $\eta$, for
the case in which every $S_i$ and $T_j$ is~1. Proof of the {\em Furthermore} 
statement is given in Appendix~\ref{facta}.
\end{proof}

\begin{lemma}
\label{gendef}
Given an $m\times n$ matrix $\Amn$, let $\Xm$ be the matrix formed by removing
every row and column of $\Abs[\Amn]$ for which all elements are equal to zero, and define
$r[i]$ to be the row of $\Xm$ corresponding to row $i$ of $\Amn$ and $c[j]$
to be the column of $\Xm$ corresponding to column $j$ of $\Amn$. Let $\Um$ and
$\Vm$ be the diagonal matrices guaranteed to exist from the application of Program~II 
to $\Xm$ according to Theorem~\ref{rzcor}. 
Existence of a general-diagonal scale function according to Definition~\ref{defdg} for
$\Amn$ is provided by instantiating $\DGL[\Amn]=\Dm$ and $\DGR[\Amn]=\Em$ where
\begin{eqnarray}
   \Dm(i,i) & = &  
   \begin{cases}
              \Um(r[i],r[i])  & \textnormal{row $i$ of $\Amn$ is not zero}\\
             1 & \textnormal{otherwise}
   \end{cases}
\\
~ & ~ & \nonumber
\\
   \Em(j,j) & = &  
   \begin{cases}
              \Vm(c[j],c[j])  & \textnormal{column $j$ of $\Amn$ is not zero}\\
             1 & \textnormal{otherwise}
   \end{cases}
\end{eqnarray}
\end{lemma}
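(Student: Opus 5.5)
The plan is to verify the four conditions of Definition~\ref{defdg} directly for the instantiated $\Dm$ and $\Em$. First, $\Dm$ and $\Em$ are strictly positive diagonals: Theorem~\ref{rzcor} applies to $\Xm$ because $\Xm$ is nonnegative with full support (every all-zero row and column of $\Abs[\Amn]$ has been removed), so $\Um$ and $\Vm$ exist and are positive, and the remaining diagonal entries are set to $1$. The central observation is that $\Dm\Amn\Em$ is obtained by taking $\Um\,\Xm\,\Vm$ (with signs/phases of $\Amn$ restored elementwise) and reinserting the zero rows and columns. Since scaling a zero row or column does nothing, $\Dm\Amn\Em$ depends only on the unique matrix $\Um\Xm\Vm$ of Theorem~\ref{rzcor}, not on which $\Um,\Vm$ Program~II returns.

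Conditions (\ref{defdg3}) and (\ref{defdg4}) are handled first and are immediate. $\Dmu\Amn\Dmv$ has the same zero pattern as $\Amn$ and the same $\Abs$, so the same $\Xm$ results. Program~II is a deterministic function of $\Xm$, so it returns identical $\Um$ and $\Vm$, and hence identical $\Dm$ and $\Em$.

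For (\ref{defdg1}), the positive scaling $\Dmp\Amn\Emp$ has the same zero rows and columns as $\Amn$. Its reduced matrix is $\Dmp'\Xm\Emp'$, where $\Dmp'$ and $\Emp'$ are the restrictions of $\Dmp$ and $\Emp$ to the nonzero rows and columns. By the uniqueness in Theorem~\ref{rzcor}, the unit row/column-product scaling of $\Dmp'\Xm\Emp'$ is a positive diagonal scaling of $\Xm$ with the unit-product property, so it equals $\Um\Xm\Vm$. Reinserting zero rows and columns, and carrying the phases of $\Amn$ through unchanged (positive scalings do not alter them), gives $\DGL[\Dmp\Amn\Emp]\,\Dmp\Amn\Emp\,\DGR[\Dmp\Amn\Emp]=\Dm\Amn\Em$. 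The diagonal entries that were set to $1$ multiply only zero rows or columns, so they are irrelevant. For (\ref{defdg2}), $\Perm\Amn\Qm$ yields a reduced matrix that is a row/column permutation of $\Xm$. Here I invoke the permutation consistency of Program~II noted in the footnote to Theorem~\ref{rztheorem}, or more robustly the uniqueness argument again: a permutation of $\Um\Xm\Vm$ is a diagonal scaling of the permuted $\Xm$, and it still has unit row and column products. Uniqueness then forces equality of the scaled matrices, which is exactly (\ref{defdg2}).

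The main obstacle is bookkeeping rather than depth. The index maps $r[\cdot]$ and $c[\cdot]$ must be shown to commute with permutations and with the zero-row removal, and the argument must be phrased entirely through the uniqueness of the scaled matrix in Theorem~\ref{rzcor}, since the scaling factors themselves may be nonunique. I would make this explicit by proving a small auxiliary claim: for any positive diagonals $\Dm',\Em'$ of matching sizes such that $\Dm'\Abs[\Amn]\Em'$ has unit products of nonzero entries in every nonzero row and column, the product $\Dm'\Amn\Em'$ is uniquely determined. Each of (\ref{defdg1}) and (\ref{defdg2}) then reduces to checking that the candidate scalings satisfy the hypothesis of this claim.
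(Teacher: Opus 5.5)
Your proposal is correct and follows essentially the same route as the paper. Condition (\ref{defdg1}) comes from the uniqueness of the Rothblum--Zenios unit-product scaling of $\Xm$, together with the fact that all-zero rows and columns are unaffected by scaling, and conditions (\ref{defdg3})--(\ref{defdg4}) follow because the construction depends only on $\Abs[\Amn]$. For (\ref{defdg2}) you reuse the uniqueness argument, whereas the paper appeals to indexing and the asserted permutation consistency of Program~II; yours is slightly more robust, since (\ref{defdg2}) constrains only the scaled product and not the scaling factors themselves.
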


\begin{proof}
In the case that $\Amn$ has full support so that $\Xm=\Abs[\Amn]$ then
Theorem~\ref{rztheorem} guarantees that 
$\DGL[\Xm]\cdot\Xm\cdot\DGR[\Xm]$ is the unique diagonal scaling of 
$\Xm$ such that the product of the nonzero elements of each row and
column is 1. Therefore, the scale-invariance condition of Eq.(\ref{defdg1}):
\begin{equation}
    \DGL[\Dmp\Xm\Emp]\cdot(\Dmp\Xm\Emp)\cdot\DGR[\Dmp\Xm\Emp] 
            ~ = ~ \DGL[\Xm]\cdot\Xm\cdot\DGR[\Xm]
\end{equation}
holds for any positive diagonals $\Dmp$ and $\Emp$
as required. For the case of general $\Amn$ the construction 
defined by Lemma~\ref{gendef} preserves uniqueness with respect to nonzero 
rows and columns of $\DGL[\Amn]\cdot\Abs[\Amn]\cdot\DGR[\Amn]$,
i.e., those which correspond to the rows and columns of $\Um\Xm\Vm$, 
by the guarantee of Theorem~\ref{rztheorem}, and any row or column with all 
elements equal to zero is inherently scale-invariant, so  Eq.(\ref{defdg1}) holds
unconditionally for the construction defined by Lemma~\ref{gendef}.
The remaining conditions (permutation consistency and invariance
with respect to unitary diagonals) hold equivalently to the proof
of Lemma~\ref{nonzerodef}.
\end{proof}

At this point it is possible to establish the existence of a fully-general,
unit-consistent, generalized matrix inverse.

\begin{theorem}
\label{genginv}
For an $m\times n$ matrix $\Amn$ there exists an operator 
\begin{equation}
   \Aginv ~ \doteq ~ \DGR[\Amn]\cdot
         \pinv{\left(\DGL[\Amn]\cdot\Amn\cdot\DGR[\Amn]\right)}\cdot\DGL[\Amn]
\end{equation}
which satisfies for any nonsingular diagonal matrices $\Dm$ and $\Em$:
\begin{eqnarray}
   \Amn\Aginv\Amn & = & \Amn, \label{g1}\\
   \Aginv\Amn\Aginv & = & \Aginv, \label{g2}\\
   \ginv{(\Dm\Amn\Em)} & = & \Emi\Aginv\Dmi, \label{g3}\\
   \mbox{rank}[\Aginv] & = & \mbox{rank}[\Amn] \label{g5}.
\end{eqnarray}
\end{theorem}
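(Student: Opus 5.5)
The plan is to instantiate $\DGL[\cdot]$ and $\DGR[\cdot]$ with the general-diagonal scale function of Lemma~\ref{gendef}, whose existence for arbitrary $\Amn$ rests on Theorem~\ref{rzcor}. With that choice, the argument reuses the algebra of Theorem~\ref{ginvtp}, since that proof used only two things: the defining properties of Definition~\ref{defdg} and the properties of the Moore-Penrose inverse. The elemental-nonzero hypothesis entered only through Lemma~\ref{nonzerodef}.

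First I will establish Eqs.~(\ref{g1}) and~(\ref{g2}). Because $\DGL[\Amn]$ and $\DGR[\Amn]$ are strictly positive diagonals, they are invertible. So I insert $\inv{\DGL[\Amn]}\DGL[\Amn]$ on the left and $\DGR[\Amn]\inv{\DGR[\Amn]}$ on the right, and then apply the Penrose identities $\Xm\Xpinv\Xm=\Xm$ and $\Xpinv\Xm\Xpinv=\Xpinv$ to $\Xm=\DGL[\Amn]\Amn\DGR[\Amn]$, exactly as in Theorem~\ref{ginvtp}. For Eq.~(\ref{g5}), rank is invariant under multiplication by nonsingular matrices, and $\mbox{rank}[\Xpinv]=\mbox{rank}[\Xm]$.

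The substantive step is Eq.~(\ref{g3}). I will write $\Dm=\Dmp\Dmu$ and $\Em=\Emp\Emu$, and use Eqs.~(\ref{defdg3})--(\ref{defdg4}) to discard the unitary diagonals inside the scale functions. In Theorem~\ref{ginvtp} the next step used the identity $\DGL[\Dmp\Amn\Emp]=\Dmpi\DGL[\Amn]$ (and its right analog), which followed from the closed form (\ref{disei}). Under Lemma~\ref{gendef} this identity can fail in two ways. On zero rows and columns the scale is pinned to $1$, and on the nonzero block the Program~II scalings $\Um,\Vm$ need not be unique. I expect this to be the main obstacle.

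To handle it, I will work only with the product that the definition actually controls. By Eq.~(\ref{defdg1}), $\DGL[\Dmp\Amn\Emp]\,\Dmp\Amn\Emp\,\DGR[\Dmp\Amn\Emp]=\DGL[\Amn]\Amn\DGR[\Amn]$. Setting $\Um_2=\DGL[\Dmp\Amn\Emp]\Dmp$ and $\Vm_2=\Emp\DGR[\Dmp\Amn\Emp]$, this says that $(\Um_2,\Vm_2)$ and $(\DGL[\Amn],\DGR[\Amn])$ are two positive diagonal scalings of $\Amn$ producing the same matrix $\Xm$. Restricted to the nonzero block, the ``Furthermore'' part of Theorem~\ref{rzcor} gives $\inv{\Vm_2}\Xpinv\inv{\Um_2}$-type equality: both pairs yield the same $\Vm\,\Xpinv\,\Um$ combination. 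Equivalently, $\Vm_2\Xpinv\Um_2=\DGR[\Amn]\Xpinv\DGL[\Amn]$ after rewriting in terms of the scaled matrix.

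Zero rows and columns of $\Amn$ contribute zero columns and rows of $\Xpinv$, so whatever scale values sit there are annihilated, just as in Eq.~(\ref{dlident}). It follows that $\DGR[\Dmp\Amn\Emp]\,\Xpinv\,\DGL[\Dmp\Amn\Emp]=\Empi\DGR[\Amn]\,\Xpinv\,\DGL[\Amn]\Dmpi$. Inserting the unitary parts via $\pinv{(\Dmu\Xm\Emu)}=\Emui\Xpinv\Dmui$, and commuting diagonals, then gives $\Emi\Aginv\Dmi$, mirroring steps (\ref{gpinvua})--(\ref{gpinvub}).
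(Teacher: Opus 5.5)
Your proposal is correct and takes essentially the paper's approach: reuse the algebra of Theorem~\ref{ginvtp} with the scaling of Lemma~\ref{gendef}, and absorb the non-uniqueness of the scaling through the uniqueness result of Theorem~\ref{rzcor}, proved in Appendix~\ref{facta}. You are more careful than the paper's one-line proof, because the identity $\DGL[\Dmp\Amn\Emp]=\Dmpi\DGL[\Amn]$ used in Theorem~\ref{ginvtp} really can fail under Lemma~\ref{gendef}, and replacing it by Eq.~(\ref{defdg1}) together with $\Vm_2\Xpinv\Um_2=\DGR[\Amn]\Xpinv\DGL[\Amn]$ is exactly the right repair (the Appendix's rank-factorization argument needs neither nonnegativity nor full support, so your restriction to the nonzero block is harmless but unnecessary).
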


\begin{proof}
The proof of Theorem~\ref{ginvtp} applies unchanged to
Theorem~\ref{genginv} except that the
elemental-nonzero condition imposed by Lemma~\ref{nonzerodef}
is removed by use of Lemma~\ref{gendef}.
\end{proof}

For completeness, the example of Eqs.(\ref{beginexample}-\ref{endexample})
with
\begin{equation}
\Dm ~=~ \left[\begin{array}{cc}
                         1 & 0\\ 0 & 2 \end{array}\right] ~ ~ ~ ~
\Amn ~=~ \left[\begin{array}{cc}
                         1/2 & -1/2\\ 1/2 & -1/2 \end{array}\right] ~ ~ ~ ~
\end{equation}
can be revisted to verify that 
\begin{equation}
\ginv{(\Dm\Amn\Dmi)} ~=~ \Dm\Aginv\Dmi ~=~ 
                 \left[\begin{array}{cc}
                         1/2 & ~1/4\\ -1 & -1/2 \end{array}\right] 
\end{equation}
where $\ginv{(\Dm\Amn\Dmi)}=\Dm\Aginv\Dmi$ as
expected. Extending the example with
\begin{equation}
\Em ~=~ \left[\begin{array}{cc}
                         5 & ~0\\ 0 & -3 \end{array}\right]
\end{equation}
it can be verified that 
\begin{equation}
\ginv{(\Dm\Amn\Em)} ~=~ \Emi\Aginv\Dmi ~=~ 
                 \left[\begin{array}{cc}
                         1/10 & 1/20\\ 1/6 & 1/12 \end{array}\right] 
\end{equation}
with equality as expected.

In practice the state space of interest may comprise subsets of variables
having different assumed relationships. For example, assume that 
$m$ state variables have incommensurate units while the remaining $n$
state variables are defined in a common Euclidean space, i.e., their relationship
should be preserved under orthonormal transformations. This assumption
requires that a linear transformation $\Amn$ should be consistent 
with respect to state-space transformations of the form
\begin{equation}
   \TR ~=~
   \left[\begin{array}{cc}
            \Dm   & {\bf 0} \\  
         {\bf 0} &    \Rm
   \end{array}\right]
\end{equation}
where $\Dm$ is a nonsingular $m\times m$ diagonal matrix and $\Rm$ is an
$n\times n$ orthonormal matrix. Thus the inverse of $\Amn$ cannot be 
obtained by applying 
either the UC inverse or the Moore-Penrose inverse, and the two inverses 
cannot be applied separately to distinct subsets of the state variables because 
all of the variables mix under the transformation. This can be seen from a
block-partition:
\begin{eqnarray}
   \begin{array}{rcl}
      \Amn & = &
         \left[ \begin{array}{cc} {\bf W} & {\bf X}\\
                             {\bf Y}& {\bf Z}\end{array} \right]
                       \begin{array}{l} {\bf \}}~m \\{\bf \}}~n \end{array}\\
                        & &  
                                \begin{array}{c}                                                          
                                   \;\underbrace{\;}_m\;\underbrace{\;}_n
                                \end{array}
   \end{array}
\end{eqnarray}
and noting that consistency in this case requires a generalized inverse that satisfies:
\begin{equation}
   \label{trblk}
   \tinv{(\TR_1\cdot\Amn\cdot\TR_2)} ~=~
         \tinv{\left[ 
                  \begin{array}{cc} \Dm_1{\bf W}\Dm_2 & \Dm_1{\bf X}\Rm_2\\
                                                 \Rm_1{\bf Y\Dm_2}& \Rm_1{\bf Z}\Rm_2
                  \end{array} \right]}
        ~=~ \inv{\TR_2}\cdot\Atinv\cdot\inv{\TR_1}~. 
\end{equation}
In the case of nonsingular $\Amn$ the partitioned inverse is unique:
\begin{equation}
   \inv{\Amn} ~=~
   \left[
     \begin{array}{cc}
     ({\bf W}-{\bf X}{\bf Z}^{-1}{\bf Y})^{-1} & -{\bf W}^{-1}{\bf X}({\bf Z}-{\bf Y}{\bf W}^{-1}{\bf X})^{-1} \\
      -{\bf Z}^{-1}{\bf Y}({\bf W}-{\bf X}{\bf Z}^{-1}{\bf Y})^{-1} & ({\bf Z}-{\bf Y}{\bf W}^{-1}{\bf X})^{-1}
     \end{array}
    \right]
\end{equation}
and is unconditionally consistent. Respecting the block constraints implicit from Eq.(\ref{trblk}), the desired
generalized inverse for singular $\Amn$ under the present assumptions can be verified as:
\begin{equation}
   \Atinv ~=~
   \left[
     \begin{array}{cc}
     \ginv{({\bf W}-{\bf X}\pinv{{\bf Z}}{\bf Y})} & -\ginv{{\bf W}}{\bf X}\pinv{({\bf Z}-{\bf Y}\ginv{{\bf W}}{\bf X})} \\
      -\pinv{{\bf Z}}{\bf Y}\ginv{({\bf W}-{\bf X}\pinv{{\bf Z}}{\bf Y})} & \pinv{({\bf Z}-{\bf Y}\ginv{{\bf W}}{\bf X})}
     \end{array}
    \right] .
\end{equation}
The general case involving different assumptions for more than two subsets of state variables 
(possibly different for the left and right spaces of the transformation) can be solved analogously
with appropriate partitioning.

The generalized inverse of Theorem~\ref{genginv} is unique when instantiated
using the construction defined by Lemma~\ref{gendef} by virtue of
the uniqueness of both the Moore-Penrose inverse and the scaling of
Theorem~\ref{rztheorem} (alternative scalings are discussed in
Appendix~\ref{altsca}), and it completes a trilogy of generalized matrix inverses 
that exhausts the standard family of transformation invariants. Specifically,
the Drazin inverse is consistent with respect to similarity transformations,
the Moore-Penrose inverse is consistent with respect to unitary/orthonormal
transformations, and the new generalized inverse is consistent with respect to 
diagonal transformations. 

In the next section it is demonstrated that the general approach for 
obtaining the UC inverse can be efficiently 
applied to a wide variety of other matrix decompositions and
operators (including other generalized matrix inverses) to impose
unit consistency.

\section{Unit-Consistent/Invariant Matrix Decompositions}
\label{ucsvd}

Unit consistency has been suggested in the past
as a critical consideration in specific applications (e.g.,
robotics~\cite{duffy90,doty2} and
data fusion~\cite{jku95}), but the means for
enforcing it have been limited because the most
commonly applied tools in linear systems analysis,
the eigen and singular-value decompositions, are inherently
not unit consistent and therefore require
UC alternatives. This motivates the need to extend
unit-consistency to other areas of matrix analysis.
This clearly includes transformations $T[\Amn]$ which can be 
redefined in UC form as 
\begin{equation} 
   \inv{\DGL[\Amn]}\cdot 
   T\left[ \DGL[\Amn]\cdot\Amn\cdot\DGR[\Amn]\right]
   \cdot\inv{\DGR[\Amn]}
\end{equation}
and functions $f[\Amn]$ which can be redefined in unit 
scale-invariant form as
$f[\DGL[\Amn]\cdot\Amn\cdot\DGR[\Amn]]$, but it also
extends to matrix decompositions. 

The Singular Value Decomposition (SVD) is among the most
powerful and versatile tools in linear algebra and 
data analytics~\cite{ytt11,michelena93,ltp10,abb00}.
The Moore-Penrose generalized inverse of $\Amn$ can be obtained 
from the SVD of $\Amn$
\begin{equation}
      \Amn ~=~ \Um\Sm\Vmi
\end{equation}
as
\begin{equation}
      \Apinv ~=~ \Vm\tinv{\Sm}\Umi
\end{equation}
where $\Um$ and $\Vm$ are unitary, $\Sm$ is the diagonal matrix
of singular values of $\Amn$, and $\tinv{\Sm}$ is the matrix obtained from
inverting the nonzero elements of $\Sm$. This motivates the
following definition.

\begin{definition}
The Unit-Invariant Singular-Value Decomposition
(UI-SVD) is defined as
\begin{equation}
   \Amn ~=~ \Dm\cdot\Um\Sm\Vmi\cdot\Em
\end{equation}
with $\Dm=\inv{\DGL[\Amn]}$, $\Em=\inv{\DGR[\Amn]}$, and
$\Um\Sm\Vmi$ is the SVD of $\Xm=\DGL[\Amn]\cdot\Amn\cdot\DGR[\Amn]$.
The diagonal elements of $\Sm$ are referred to as the 
{\em unit-invariant} (UI) {\em singular values} of $\Amn$. 
\end{definition}

Given the UI-SVD of a matrix $\Amn$
\begin{equation}
      \Amn ~=~ \Dm\cdot\Um\Sm\Vmi\cdot\Em
\end{equation}
the UC generalized inverse of $\Amn$
can be expressed as
\begin{equation}
     \Aginv ~=~ \Emi\cdot\Vm\tinv{\Sm}\Umi\cdot\Dmi.
\end{equation}
Unlike the singular
values of $\Amn$, which are invariant with respect
to arbitrary left and right unitary transformations of
$\Amn$, the UI singular values
are invariant with respect to arbitrary left and right nonsingular
diagonal transformations\footnote{A {\em left} 
UI-SVD can be similarly defined as $\Amn  =  \Dm\cdot\Um\Sm\Vmi$
with $\Dm=\inv{\DL[\Amn]}$ and 
$\Um\Sm\Vmi$ being the SVD of $\Xm=\DL[\Amn]\cdot\Amn$.
The resulting {\em left unit-invariant singular values} are
invariant with respect to nonsingular left diagonal transformations
and right unitary transformations of $\Amn$ (the latter property
is what motivated the right unitary-invariance requirement
of Definition~\ref{defdl} and the consequent use of a
unitary-invariant norm in the construction of
Lemma~\ref{dconst}.). A {\em right}
UI-SVD can be defined analogously to interchange the
left and right invariants.}. Thus, functions of the unit-invariant
singular values are unit-invariant with respect to $\Amn$.

The largest $k$ singular values of a matrix 
(e.g., representing a photograph, video sequence,
or other object of interest) can be used to
define a unitary-invariant 
signature~\cite{czt12,gjmeyer03,hong91,luochen94,jeonglee06,jeongle09}
which supports computationally efficient similarity testing.
However, many sources of error in practical applications
are not unitary. As a concrete example, 
consider a system in which
a passport or driving license is scanned to produce
a rectilinearly-aligned and scaled image that is to 
be used as a key to search an existing image database.
The signature formed from the largest $k$
unit-invariant singular values can be used for
this purpose to provide robustness to amplitude
variations among the rows and/or columns of the
image due to the scanning process.

The UI-SVD may also offer advantages
as an alternative to the conventional 
SVD, or truncated SVD, used by existing methods
for image and signal 
processing, cryptography, digital watermarking, 
tomography, and other applications in order to provide  
state-space or coordinate-aligned robustness to 
noise\footnote{Such applications are examined in
greater detail in a longer preprint from which this
paper was derived~\cite{uarxiv}. In applications in which the signature 
provided by the set of UI singular values may be too 
concise~\cite{ttw03}, e.g., because it is permutation 
invariant, a vectorization of the matrix 
$\Amn\circ(\Aginv)^{\mbox{\tiny T}}$
can be used as a more discriminating UI
signature.}.

More generally, the approach used to define
unit scale-invariant singular values can be applied
to other matrix decompositions, though the invariance
properties may be different. In the case of 
scale-invariant eigenvalues for square $\Amn$, i.e.,
eig[$\DGL[\Amn]\cdot\Amn\cdot\DGR[\Amn]$], the
invariance is limited to diagonal transformations
$\Dm\Amn\Em$ such that $\Dm\Em$ is
nonnegative real, e.g., $\Dmp\Amn\Emp$, 
$\Dm\Amn\Dm$ (or $\Dm\Amn\bar{\Dm}$ for
complex $\Dm$), and $\Dm\Amn\Dmi$. In applications
in which changes of units can be assumed to take the
form of positive diagonal transformations, the 
scale-invariant (SI) eigenvalues can therefore be 
taken as a complementary or alternative signature 
to that provided by the UI singular values.

\section{Discussion}
\label{discsec}

The principal contribution of this paper is the derivation
of a unit-consistent generalized matrix inverse. Its 
consistency with respect to diagonal transformations
provides an alternative to the Drazin inverse, which is
only consistent with respect to similarity transformations, 
and the Moore-Penrose inverse, which is only consistent
with respect to unitary/orthonormal transformations.
This inverse has broad potential applications as a replacement
to Moore-Penrose, e.g., for unit-consistent or unit-invariant
gradient descent for optimization and deep learning.
Another contribution of the paper is a demonstration
that a partitioned inverse can be constructed to respect
different consistency assumptions associated with 
different subsets of variables of a defined state space.
This is essential to permit a large, complex linear system
to be mathematically expressed and analyzed in a unified 
manner. More generally, different inverses can also be
applied in parallel to obtain sets of distinct solutions, e.g., 
in order to extract information under different invariance
and/or consistency assumptions in pattern
recognition and machine learning applications. 

It has been emphasized that the appropriate choice 
of inverse depends critically on the assumed properties
of the system of interest.
It should also be emphasized that while the Drazin, Moore-Penrose,
and UC inverses may be {\em analytically} special because of their
respective algebraic properties, there are many practical
situations in which a non-algebraic ``heuristic'' inverse may
represent the more appropriate choice. For example, 
consider the following singular matrix:
\begin{equation}
\Amn ~=~ \left[\begin{array}{ccc}
                         \frac{1}{2} & 0  & 0\\ 0 & 3 & 0 \\ 0 & 0 & 0 \end{array}\right] 
\end{equation}
From a practical perspective, if the matrix is assumed to be nonnegative
then its ``true'' inverse is
\begin{equation}
\inv{\Amn}~=~ \left[\begin{array}{ccc}
                         2 & 0  & 0\\ 0 & \frac{1}{3} & 0 \\ 0 & 0 & \infty \end{array}\right] 
\end{equation}
and could be approximated with a large number in place of infinity as
\begin{equation}
\label{blowup}
\Atinv ~=~ \left[\begin{array}{ccc}
                         2 & 0  & 0\\ 0 & \frac{1}{3} & 0 \\ 0 & 0 & 10^{15} \end{array}\right] 
\end{equation}
so as to capture the blow-up of zero when inverted.
This blow-up behavior is lost completely if an algebraic inverse is
applied:
\begin{equation}
\Adinv ~=~ \Apinv ~=~ \Aginv ~=~ \left[\begin{array}{ccc}
                         2 & 0  & 0\\ 0 & \frac{1}{3} & 0 \\ 0 & 0 & 0 \end{array}\right] 
\end{equation}
This counter-intuitive (and in many practical situations {\em wrong}) result derives from the fact
that no finite value can be chosen in a way that preserves any form of pure {\em algebraic}
consistency\footnote{It might not be inappropriate to formally define the result of
Eq.(\ref{blowup}) as a {\em blow-up consistent} generalized matrix inverse.}. 
This is just one more example of the danger of  blindly
using the Moore-Penrose pseudoinverse (or any other generalized inverse) to deal 
generically with the problem of inverting a singular matrix. In summary, it is essential
to make the choice of generalized matrix inverse based on the application-specific 
properties that are necessary to preserve, and this paper has provided the appropriate
choice for unit consistency.

\appendices

\section{Uniqueness of the UC Inverse}
\label{facta}

By virtue of the uniqueness of the Moore-Penrose inverse, the UC inverse from 
Theorem~\ref{genginv} is uniquely determined given a scaling $\Amn=\DGL\Xm\DGR$ 
produced according to Theorem~\ref{rzcor}. However, the positive diagonal matrices $\DGL[\Amn]$ 
and $\DGL[\Amn]$ are not necessarily unique, so there may exist distinct positive
diagonal matrices $\Dma$ and $\Dmb$ and $\Ema$ and $\Emb$ such that
\begin{equation}
     \Amn ~=~ \Dma\Xm\Ema ~=~ \Dmb\Xm\Emb. 
\end{equation}
What remains is to establish the uniqueness of $\Aginv$ in this case, i.e., that
\begin{equation}
    \Dma\Xm\Ema = \Dmb\Xm\Emb ~\implies~
   \Emia\Xpinv\Dmia = \Emib\Xpinv\Dmib.
\end{equation}

We begin by noting that if an arbitrary $m\times n$ matrix $\Amn$ has rank $r$ then it can be factored~\cite{big} as the product
of an $m\times r$ matrix $\Fm$ and an $r\times n$ matrix
$\Gm$ as
\begin{equation}
\Amn ~ = ~ \Fm\Gm
\end{equation}
The Moore-Penrose inverse can then be expressed in terms of
this rank factorization as
\begin{equation}
\Apinv ~ = ~ \Gmt\cdot\inv{(\Fmt\cdot\Amn\cdot\Gmt)}\cdot\Fmt \label{rfac}
\end{equation}
where $\Gmt$ and $\Fmt$ are the conjugate transposes
of $\Gm$ and $\Fm$. 

Because $\Dma\Xm\Ema=\Dmb\Xm\Emb$ implies
\begin{equation}
     \Xm ~=~ \Dmib\Dma\Xm\Ema\Emib 
\end{equation}
then from the rank factorization $\Xm=\Fm\Gm$ 
we can obtain an alternative factorization
\begin{equation}
     \Xm ~=~ \Fm'\Gm' ~=~ (\Dmib\Dma\Fm)(\Gm\Ema\Emib)
\end{equation}
from the fact that the ranks of $\Fm$ and
$\Gm$ are unaffected by nonsingular diagonal
scalings. Applying the rank factorization identity
for the Moore-Penrose inverse then yields
\begin{eqnarray}
     \Xpinv & = & \pinv{(\Fm'\Gm')} \\
~ & = & (\Gm\Ema\Emib)^*\cdot
           \inv{ \left( (\Dmib\Dma\Fm)^*\cdot\Xm\cdot (\Gm\Ema\Emib)^* \right) }
                        \cdot (\Dmib\Dma\Fm)^* \\
~ & = & \Ema\Emib\Gmt\cdot
                \inv{ \left( (\Fmt\Dmib\Dma)\Xm(\Ema\Emib\Gmt) \right) }
                        \cdot\Fmt\Dmib\Dma \\
~ & = & (\Ema\Emib)\cdot\Gmt\cdot
                \inv{ \left( \Fmt\cdot(\underline{\Dmib\Dma\Xm\Ema\Emib})\cdot\Gmt \right) }
                        \cdot\Fmt\cdot(\Dmib\Dma) \\
~ & = & (\Ema\Emib)\cdot\left(\Gmt\cdot
                \inv{ \left( \Fmt\Xm\Gmt \right) }\cdot
                        \Fmt\right)\cdot(\Dmib\Dma) \\
~ & = & \Ema\Emib\left(\underline{\Gmt\cdot\inv{(\Fmt\Xm\Gmt)}\cdot\Fmt}\right)\Dmib\Dma \\
~ & = & \Ema\Emib\Xpinv\Dmib\Dma 
\end{eqnarray}
which implies\footnote{Note that the diagonal matrices commute and are real so, e.g., 
$\Dm^*=\Dm$.} 
\begin{eqnarray}
\Emia\Xpinv\Dmia & = & \Emia\cdot\left(\Ema\Emib\Xpinv\Dmib\Dma\right)\cdot\Dmia \\
~ & = & (\Emia\Ema)\cdot\Emib\Xpinv\Dmib\cdot(\Dma\Dmia) \\
~ & = & \Emib\Xpinv\Dmib
\end{eqnarray}
and thus establishes that $\Emia\Xpinv\Dmia=\Emib\Xpinv\Dmib$ and therefore
that the UC generalized inverse $\Aginv$ is unique.

Using a similar but more involved application of rank factorization it can be 
shown that the UC generalized matrix inverse satisfies
\begin{equation}
   \Aginv \cdot \ginv{(\Aginv)} \cdot \Aginv ~=~ \Aginv
\end{equation}
which is weaker than the uniquely-special property of the Moore-Penrose
inverse:
\begin{equation}
    \pinv{(\Apinv)}=\Amn. 
\end{equation}

\section{Alternative Constructions}
\label{altsca}

The proofs of Theorems~\ref{linvt} and and~\ref{ginvtp}
(and consequently Theorem~\ref{genginv}) do not actually 
require the general unitary consistency property of 
the Moore-Penrose inverse and instead 
only require diagonal unitary consistency, e.g.,
in Eqs.(\ref{lmpua})-(\ref{lmpub}) as
\begin{equation}
    \pinv{(\Dmu\Amn)} ~ = ~ \Apinv\Dmui
\end{equation}
and in Eqs.(\ref{gpinvua})-(\ref{gpinvub}) as   
\begin{equation}
   \pinv{(\Dmu\Amn\Emu)} ~ = ~ \Emui\Apinv\Dmui
\end{equation}
for unitary diagonal matrices $\Dmu$ and $\Emu$.
Thus, the Moore-Penrose inverse could be replaced
with an alternative which maintains the other 
required properties but satisfies this weaker 
condition in place of general unitary consistency.

Similarly, the scalings defined by
Lemmas~\ref{nonzerodef} and~\ref{gendef} 
are not necessarily the only ones
that may be used to satisfy the conditions of 
Definition~\ref{defdg}. More specifically,
Lemmas~\ref{nonzerodef} and~\ref{gendef}
define left and right nonnegative diagonal scaling functions
$\DGL[\Amn]$ and $\DGR[\Amn]$ satisfying
\begin{equation}
   \DGL[\Amn]\cdot\Amn\cdot\DGR[\Amn]  ~ = ~
   \DGL[\Dmp\Amn\Emp]\cdot\Dmp\Amn\Emp\cdot\DGR[\Dmp\Amn\Emp]
\end{equation}
for all positive diagonals $\Dmp$ and $\Emp$. Because the unitary
factors of the elements of $\Amn$ are unaffected by the nonnegative
scaling, the scalings can be constructed without loss of generality from
$\Abs[\Amn]$. If nonnegative $\Amn$ is square, irreducible, and has full 
support then such a scaling can be obtained by alternately normalizing the rows
and columns to have unit sum using the Sinkhorn iteration~\cite{sink64,sink67}.
The requirement for irreducibility stems from the fact that the process
cannot always converge to a finite left and right scaling. For example,
the matrix 
\begin{equation}
   \label{trimat}
   \begin{bmatrix}
      a & b \\ 0 &  c
   \end{bmatrix}
\end{equation} 
cannot be scaled so that the rows and columns sum to unity unless
the off-diagonal element $b$ is driven to zero, which is not possible
for any finite scaling. In other words, the Sinkhorn unit-sum
condition cannot be jointly satisfied with respect to both the set
of row vectors and the set of column vectors. What is needed,
therefore, is a measure of vector ``size'' that can be applied
within a Sinkhorn-type iteration but is guaranteed to converge to
a finite scaling\footnote{This definition and the
subsequently-defined instance, $s_{a,b}[\uv]$, may be of 
independent interest for analyzing properties
of low-rank subspace embeddings in high-dimensional vector 
spaces, e.g., infinite-dimensional spaces.}. 

\begin{definition}
For all vectors $\uv$ with elements from a normed division algebra, 
a nonnegative composable size function $s[\uv]$ is defined
as satisfying the following conditions for all $\alpha$:
\begin{eqnarray}
   s[\uv] & = & 0  ~ ~ \Leftrightarrow ~ ~ \uv ~ = ~ {\bf 0}\\
   s[\alpha\uv] & = & | \alpha | \cdot s[\uv] \\
   s[\bv] & = & 1 ~ ~ ~ ~\forall \bv \in \{0,1\}^n -\znc \\
   s[\uv] & = &  s[\uv \otimes \bv] ~ = ~ s[\bv \otimes \uv] ~ ~ ~ ~\forall \bv \in \{0,1\}^n
\end{eqnarray}

\end{definition}
The defined size function provides a measure of scale that
is homogeneous, permutation-invariant, and invariant with respect to tensor expansions
involving identity and zero elements. More intuitively, however, $s[\uv]$ 
can be thought of as a ``mean-like'' measure taken over the magnitudes of the nonzero 
elements of $\uv$. With the imposed condition $s[\mbox{\bf 0}]\doteq 0$
the following instantiations can also be verified to satisfy the definition: 
\begin{eqnarray}
   \stimes[\uv] & \doteq & \left(\prod_{k\in S} |\uv_k| \right)^{1/|S|} ~ ~ ~ ~ ~ ~ ~ ~ ~  j\in S ~ ~ \mbox{iff} ~ ~ \uv(j)\neq 0 \\
   s_p[\uv] & \doteq & \norm{\uv}_p ~/~ |S|^{1/p}  ~ ~  ~ ~ ~ ~ ~ ~ ~ ~ ~ ~  j\in S ~ ~ \mbox{iff} ~ ~ \uv(j)\neq 0 \\
   s_{a,b}[\uv] & \doteq &  \left(\frac{\sum_i |\uv_i|^{a+b}}{\sum_i |\uv_i|^a}\right)^{1/b} ~ ~ ~ ~ ~ ~ ~ ~ ~ ~ ~ a>0,~b>0
\end{eqnarray}
The first case, $\stimes(\uv)$, is more easily interpreted as the geometric mean of the 
nonzero elements of $\uv$. Its application in a Sinkhorn-type iteration converges to a 
unique scaling in which the {\em product} of the nonzero elements in each row and column has unit
magnitude. If $a$, $b$, and $c$ are positive for the matrix 
of Eq.(\ref{trimat}) then the scaled result using $s_p[\uv]$ is
\begin{equation}
   \begin{bmatrix}
      1 & 1 \\ 0 &  1
   \end{bmatrix}
\end{equation} 
where the product of the nonzero elements in each row and column is unity and the
particular left and right diagonal scalings are determined by the values of 
$a$, $b$, and $c$. It can be shown that for all elemental nonzero 
matrices that the scaling produced 
using $\stimes(\uv)$ is equivalent to that produced by the 
constructions defined by Lemmas~\ref{nonzerodef} and~\ref{gendef}
and that the iteration is fast-converging.

The row/column conditions imposed by $s_p[\uv]$ can most easily be understood
in the case of $p=1$, for which it is equivalent to the mean of the absolute values of
the nonzero elements of $\uv$. In the case of $p=2$, if a vector $\vv$ is formed 
from the $m$  nonzero elements of $\uv$ then 
\begin{equation}
        s_2[\uv] ~ = ~  \norm{\vv}_2 ~/ ~m^{1/2}
\end{equation}
In the example of the $2\times 2$  matrix of Eq.(\ref{trimat}) the scaled result 
produced using $s_p[\uv]$ for any $p>0$ happens to be the same as that produced
using $\stimes[\uv]$. For nontrivial matrices, however, the results for different 
$p$ are not generally (nor typically) equivalent to each other or to that 
produced by $\stimes(\uv)$. 

The third size function, $s_{a,b}[\uv]$, satisfies the required conditions
without imposing special treatment of zero elements. In other words, it is a
continuous function of the elements of $\uv$ and would therefore 
appear to be a more natural choice for instantiating $\DGL[\Amn]$ and 
$\DGR[\Amn]$ for analysis purposes, e.g., in the limit as $a$ and $b$ 
go to zero where $s_{a,b}[\uv]\equiv\stimes[\uv]$. (It should be noted
that the homogeneity properties of $s_{a,b}[\uv]$ hold generally
for any $a$ and $b$ from a normed division algebra with $0^0\doteq 1$, 
and it subsumes $s_p[\uv]$ in the limiting cases $a\rightarrow 0$ and/or 
$b\rightarrow 0$.)

\section{Implementations}
\label{code}

Below are basic Octave/Matlab implementations
of some of the methods developed in the paper.
Although not coded for maximum efficiency or numerical robustness, they should 
be sufficient for experimental corroboration of theoretically-established properties.\\
~\\
\noindent The following function computes $\Aginv$ for 
$m\times n$ real or complex matrix $\Amn$. It has complexity
dominated by the Moore-Penrose inverse calculation,
which is $O(mn\cdot\min(m,n))$.
\begin{verbatim}
function Ai = uinv(A)
    [S, dl, dr] = dscale(A);
    Ai = pinv(S) .* (dl * dr)';
end
\end{verbatim}

\noindent The following function evaluates the 
UC/UI singular values of the real or
complex matrix $\Amn$.
\begin{verbatim}
function s = usvd(A)
    s = svd(dscale(A));
end
\end{verbatim}

\noindent The following function evaluates the 
UC/UI singular-value decomposition of the $m\times n$ real or 
complex matrix $\Amn$.
\begin{verbatim}
function [D, U, S, V, E] = usv_decomp(A)
    [S, dl, dr] = dscale(A);
    D = diag(1./dl);  E = diag(1./dr);
    [U, S, V] = svd(S);
end
\end{verbatim}

\noindent The following function computes the unique
positively-scaled matrix
$\Sm=\DGL[\Amn]\cdot\Amn\cdot\DGR[\Amn]$
with diagonal left and right scaling matrices
$\DGL[\Amn]=\mbox{diag[dl]}$ and
$\DGR[\Amn]=\mbox{diag[dr]}$. It has 
$O(mn)$ complexity for $m\times n$ real or 
complex matrix $\Amn$.
\begin{verbatim}
function [S, dl, dr] = dscale(A)
    tol = 1e-15;    
    [m, n] = size(A);
    L = zeros(m, n);    M = ones(m, n);   
    S = sign(A);   A = abs(A);
    idx = find(A > 0.0);  L(idx) = log(A(idx));
    idx = setdiff(1 : numel(A), idx);
    L(idx) = 0;    M(idx) = 0;   
    r = sum(M, 2);   c = sum(M, 1);   
    u = zeros(m, 1); v = zeros(1, n);
    dx = 2*tol;  
    while (dx > tol)
        idx = c > 0;
        p = sum(L(:, idx), 1) ./ c(idx);
        L(:, idx) = L(:, idx) - repmat(p, m, 1) .* M(:, idx);
        v(idx) = v(idx) - p;  dx = mean(abs(p));
        idx = r > 0;
        p = sum(L(idx, :), 2) ./ r(idx);
        L(idx, :) = L(idx, :) - repmat(p, 1, n) .* M(idx, :);
        u(idx) = u(idx) - p;  dx = dx + mean(abs(p));
    end    
    dl = exp(u);   dr = exp(v);
    S = S.* exp(L);
end
\end{verbatim}


\begin{thebibliography}{7}

\bibitem{abb00}
O. Alter, P.O. Brown, D. Botstein, 
``Singular Value Decomposition for Genome-Wide Expression Data Processing and Modeling,''
{\em Proc Natl Acad Sci}, 97(18):10101-6, 2000.

\bibitem{altergolub04}
O. Alter O, G.H. Golub, ``Integrative Analysis of Genome-Scale Data by Using Pseudoinverse Projection Predicts Novel Correlation Between DNA Replication and RNA Transcription,'' {\em Proc Natl Acad Sci}, 101(47):16577-16582, 2004.


\bibitem{big}
A. Ben-Israel and N.E. Greville, {\em Generalized Inverses: Theory
and Applications}, 2nd Edition, Springer-Verlag, 2003. 

\bibitem{berryman00}
J. G. Berryman, ``Analysis of Approximate Inverses in Tomography. I. Resolution analysis,'' {\em Optimization and Engineering}, 1, 87-117, 2000.

\bibitem{cmr76}
S.L. Campbell, C.D. Meyer, and N.J. Rose, ``Applications of the Drazin Inverse to Linear Systems of Differential Equations with Singular Constant Coefficients,'' {\em SIAM Journal of Applied Mathematics}, Vol. 31, No. 3, 1976.

\bibitem{czt12}
B. Cui, Z. Zhao, W.H. Tok, ``A Framework for Similarity Search of Time Series 
Cliques with Natural Relations,'' {\em IEEE Transaction on Data and Knowledge 
Engineering},  2012.


\bibitem{doty2}
K. L. Doty, C. Melchiorri, and C. Bonivento, ``A Theory of Generalized Inverses Applied to
Robotics,'' {\em International Journal of Robotics
Research}, vol. 12, no. 1, pp. 1-19, 1995.

\bibitem{drazin}
M. Drazin, ``Pseudo-Inverses in Associative Rings and Semigroups,'' 
{\em The American Mathematical Monthly}, 65:7, 1958.

\bibitem{duffy90}
J. Duffy, ``The Fallacy of Modern Hybrid Control Theory that is Based on
`Orthogonal Complements' of Twists and Wrenches Spaces'', 
{\em Int. J. of Robotic Systems}, 7(2), 1990.

\bibitem{hong91}
Z-Q Hong, ``Algebraic feature extraction of image for recognition,'' 
{\em Pattern Recognition}, 24(3), 211-219, 1991.

\bibitem{jeongle09}
KM Jeong and J-J Lee, ``Video Sequence Matching Using Normalized 
Dominant Singular Values,'' {\em Journal of the
Korea Multimedia Society}, Vol.12:12,  Page 785-793, 2009.

\bibitem{jeonglee06}
KM Jeong, J-J Lee, Y-H Ha, ``Video sequence matching using singular value decomposition,''
{\em Proc. 3rd Int. Conf. Image Analysis and Recognition} (ICIAR), pp 426-435, 2006.

\bibitem{ltp10}
F. Leblond, K.M. Tichauer, B.W. Pogue, ``Singular Value Decomposition Metrics
Show Limitations of Detector Design in Diffuse Fluorescence Tomography,''
{\em Biomedical Optics Express.}, 1(5):1514-1531, 2010.

\bibitem{luochen94}
J. H. Luo and C. C. Chen, ``Singular Value Decomposition for Texture Analysis,''
{\em Applications of Digital Image Processing XVII}, SPIE Proceedings,
vol. 2298, pp.407-418, 1994.

\bibitem{gjmeyer03}
G.J. Meyer, {\em Classification of Radar Targets using Invariant Features},
Dissertation, Air Force Institute of Technology, AFIT/DS/ENG/03-04, 2003.



\bibitem{michelena93}
R.J. Michelena, ``Singular Value Decomposition for Cross-Well Tomography,''
{\em Geophysics}, 58(11):1655-1661, 1993.





\bibitem{rz92}
U.G. Rothblum and S.A. Zenios, ``Scalings of Matrices Satisfying Line-Product Constraints and Generalizations,'' 
{\em Linear Algebra and Its Applications}, 175: 159-175, 1992.


\bibitem{sink64}
R. Sinkhorn, ``A relationship between arbitrary positive matrices and doubly stochastic matrices,'' 
{\em Ann. Math. Statist.}, 35, 876-879, 1964.

\bibitem{sink67}
R. Sinkhorn and P.  Knopp, ``Concerning nonnegative matrices and doubly stochastic matrices,'' 
{\em Pacific J. Math.}, 21, 343-348, 1967.



\bibitem{ttw03}
Y. Tian, T. Tan, and Y. Wang, ``Do singular values contain adequate information for face recognition?,'' {\em  Pattern Recognition}, 36:649-655, 2003.

\bibitem{uarxiv} J.K. Uhlmann, ``Unit Consistency, Generalized Inverses, and
Effective System Design Methods,'' arXiv:1604.08476v2 [cs.NA] 11 Jul 2017.

\bibitem{jku95}
J.K. Uhlmann, {\em Dynamic Map Building and Localization: New Theoretical Foundations},
pp. 86-87, Doctoral Dissertation, University of Oxford, 1995.

\bibitem{ytt11}
H. Yanai, K. Takeuchi, Y. Takane, {\em Projection Matrices, Generalized Inverse Matrices, and 
Singular Value Decomposition},'' Springer, ISBN-10:1441998861, 2011.


\end{thebibliography}
\end{document}